\newcolumntype{P}[1]{>{\centering\arraybackslash}p{#1}}\newcolumntype{M}[1]{>{\centering\arraybackslash}m{#1}}
\newtheorem{theorem}{Theorem}[section]
\newtheorem{lemma}[theorem]{Lemma}
\newtheorem{proposition}[theorem]{Proposition}
\newtheorem{claim}[theorem]{Claim}
\theoremstyle{definition}
\newtheorem{definition}[theorem]{Definition}
\newtheorem{assumption}[theorem]{Assumption}
\theoremstyle{remark}
\newtheorem*{remark}{Remark}
\newcommand{\R}{\mathbb{R}}
\newcommand{\Z}{\mathbb{Z}}
\newcommand{\Prob}{\mathbb{P}}
\newcommand{\Proba}[1]{ \Prob\left[ #1 \right]}
\newcommand{\norm}[2]{\left\Vert #1 \right\Vert_{#2}}
\newcommand{\scal}[2]{\left<#1,#2\right>}
\newcommand{\ZZ}{\mathbb{Z}^2}
\newcommand{\excur}{\mathcal{E}_\ell}
\newcommand{\connects}{\overset{\excur(f)}{\longleftrightarrow}}
\newcommand{\chem}{\text{chem}}
\newcommand{\diam}{\text{diam}}
\newcommand{\dchem}{d_{\chem}^{\excur(f)}}
\title{Chemical distance in the supercritical phase \\of planar Gaussian fields}
\author{David Vernotte}
\date{December 2023}
\begin{document}

\maketitle

\begin{abstract}
    Our study concerns the large scale geometry of the excursion set of planar random fields: $\excur= \{x\in \mathbb{R}^2 | f(x) \geq -\ell\}$, where $\ell\in \mathbb{R}$ is a real parameter and $f$ is a continuous, stationary, centered, planar Gaussian field satisfying some regularity assumptions (in particular, this study applies to the planar Bargmann-Fock field). It is already known (see for instance \cite{Threshold}) that under those hypotheses there is a phase transition at $\ell_c=0$. When $\ell>0$, we are in a supercritical regime and almost surely $\excur$ has a unique unbounded connected component. We prove that in this supercritical regime, whenever two points are in the same connected components of $\excur$ then,  with high probability, the chemical distance (the length of the shortest path in $\excur$ between these points) is close to the Euclidean distance between those two points
\end{abstract}

\tableofcontents

\section{Introduction}
In this article, we discuss the geometry of the excursion set of a random Gaussian planar field $f$. To motivate this work we begin by a brief reminder of the Bernoulli percolation model on $\ZZ$.

Consider the graph $\ZZ$ (two sites have an edge between them if and only if they are at distance $1$). For each edge $e$ in this graph, we say that $e$ is open with probability $p$ and we keep it in the graph, otherwise we say that $e$ is closed and we remove it from the graph (the choices for the different edges being independent). The new graph we obtain is a random graph that presents several clusters (several connected components). It is known that when $p$ varies from $0$ to $1$ there is a phase transition at some critical parameter $p_c\in ]0,1[$. When $p<p_c$ then almost surely there will only be bounded clusters, but when $p>p_c$ then almost surely there exists exactly one unbounded cluster (and also many bounded ones). A famous theorem of Kesten addresses the value of $p_c$,
\begin{theorem}[Phase transition for Bernoulli percolation \cite{Kesten}]
    \label{thm:kesten}
    For the Bernoulli percolation model on $\ZZ$ we have $p_c=\frac{1}{2},$ that is:
    \begin{itemize}
        \item If $p<1/2$, then almost surely there is no infinite open cluster.
        \item If $p>1/2$, then almost surely there exists a unique infinite cluster.
    \end{itemize}
\end{theorem}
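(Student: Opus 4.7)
The plan is to establish the two inequalities $p_c \leq 1/2$ and $p_c \geq 1/2$ separately, exploiting in each case the \emph{self-duality} of the square lattice: the dual graph of $\mathbb{Z}^2$ is a translate of $\mathbb{Z}^2$, and under the coupling that declares the primal edge $e$ open (with probability $p$) if and only if its dual $e^\star$ is closed, the dual configuration is itself a Bernoulli bond percolation at parameter $1-p$. In particular, at $p = 1/2$ the primal and the dual configurations have identical laws.

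For the direction $p_c \leq 1/2$, I would argue by contradiction, supposing $p_c > 1/2$ so that $p = 1/2$ is strictly subcritical. The sharp phase transition result (Aizenman--Barsky, Menshikov, or the modern Duminil-Copin--Tassion approach) then yields an exponential decay $\Prob_{1/2}\bigl[0 \leftrightarrow \partial B_n\bigr] \leq C e^{-cn}$, and by self-duality the same bound applies to closed dual connections. A union bound over the $O(n)$ starting points on the left boundary of an $n \times (n+1)$ rectangle forces both the probability of a primal open left-right crossing and that of a dual closed top-bottom crossing to tend to zero---contradicting the elementary fact that these two events are complementary and so have probabilities summing to one.

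For the direction $p_c \geq 1/2$, I would prove $\theta(1/2) = 0$ via Zhang's argument. Assume for contradiction that $\theta(1/2) > 0$; then the Burton--Keane uniqueness theorem gives a unique infinite primal cluster almost surely, and by self-duality the same conclusion holds for the dual. For $S_n = [-n,n]^2$, translation invariance combined with the existence of the infinite cluster forces the probability of the event $A_n^L$---``the infinite cluster meets the left side of $S_n$ via a path using only edges outside $S_n$''---to tend to $1$, and symmetrically for $A_n^R, A_n^T, A_n^B$. The FKG inequality applied within the increasing primal events yields $\Prob[A_n^L \cap A_n^R] \to 1$, and analogously $\Prob[B_n^T \cap B_n^B] \to 1$ for the two corresponding decreasing dual events. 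A crude union bound combining the primal and dual statements then produces, for large $n$, a positive-probability configuration on which two disjoint infinite primal paths exit the left and right sides of $S_n$ while two infinite dual paths exit the top and bottom; but the dual paths topologically separate the primal ones, yielding two distinct infinite primal clusters and contradicting Burton--Keane.

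The principal obstacle is the sharp phase transition input used in the first direction: this is itself a substantial theorem, whose classical proofs (via differential inequalities or geometric induction) are considerably more technical than the self-duality and FKG manipulations that drive the rest of the argument. The Burton--Keane uniqueness theorem is the other essential black box; once these two external inputs are granted, the proof reduces to the purely planar duality and symmetry observations sketched above.
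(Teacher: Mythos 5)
This statement is cited in the paper directly from the literature (\cite{Kesten}) and is never proved there; it serves only as background motivation for the continuous analogue (Theorem \ref{thm:kesten_continuous}). So there is no paper proof to compare your attempt against, and I can only assess your sketch on its own terms.

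Your outline is a correct \emph{modern} proof strategy, but it is worth noting that it is not Kesten's original argument. Historically, $\theta(1/2)=0$ (hence $p_c \geq 1/2$) is due to Harris (1960), while Kesten's 1980 contribution was the harder bound $p_c\leq 1/2$, proved by a bespoke geometric/renormalisation argument predating both Burton--Keane uniqueness (1989) and the sharp phase transition theorems you invoke. Your route --- self-duality plus Duminil-Copin--Tassion-style exponential decay for one direction, and Zhang's argument plus Burton--Keane for the other --- is cleaner but trades Kesten's hands-on estimates for two heavy external theorems, as you correctly acknowledge.

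One genuine gap in the Zhang part: you assert that ``translation invariance combined with the existence of the infinite cluster forces $\Prob[A_n^L]\to 1$,'' with FKG only appearing afterwards to combine $A_n^L$ and $A_n^R$. This is backwards. What existence of the infinite cluster gives you directly is $\Prob[A_n^L\cup A_n^R\cup A_n^T\cup A_n^B]\to 1$; to deduce that each \emph{individual} $\Prob[A_n^\bullet]\to 1$ you need the square-root trick, which is exactly where FKG and the four-fold symmetry of the lattice enter. Once that is in place, passing to the intersection $A_n^L\cap A_n^R\cap B_n^T\cap B_n^B$ needs no FKG at all --- a union bound on the complements suffices. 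As written, the sketch has FKG doing the wrong job and leaves the step that actually requires it unjustified. The final topological contradiction also deserves a sentence making explicit that dual Burton--Keane uniqueness is what lets you join the top and bottom dual rays into a single dual circuit that separates the two primal rays; you allude to this but do not state the mechanism. Both are fixable; the overall architecture is sound.
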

When $p>p_c$, we are in the so-called \textit{supercritical regime} and there exists exactly one unbounded cluster. However this is a "topological" information and it does not quite describe the geometry of this unbounded cluster. An idea to understand the geometry of this unbounded component is to consider the chemical distance.
\begin{definition}
\label{def:dchem_discrete}
In the context of Bernoulli percolation, for $x,y\in \ZZ$ we define the chemical distance $d_\chem$ between $x$ and $y$ as
\begin{equation*}
    d_\chem(x,y) := \left\{\begin{array}{cc}
        \text{min}(\text{length}(\gamma)\ | \ \gamma \text{ is a path of open edges from $x$ to $y$}) &\text{if }x \longleftrightarrow y  \\
         \infty &\text{otherwise} 
    \end{array}\right.,
\end{equation*}
where $x\longleftrightarrow y$ means that $x$ and $y$ are belongs to the same cluster of our random graph, and the length of a path $\gamma$ joining $x$ and $y$ is simply the number of edges in this path.
\end{definition}
\begin{remark}
Note that the chemical distance is \textit{random} in the sense that the chemical distance between two points $x$ and $y$ will vary according to the realization of our graph. When $x$ and $y$ are connected, then $d_\chem(x,y)$ simply represents the graph distance between the two points.
\end{remark}
A natural question is whether this chemical distance has a behaviour close to the behaviour of the Euclidean distance or far from it. This question was addressed by Peter Antal and Agoston Pisztora.
\begin{theorem}[Chemical distance for Bernoulli percolation in supercritical regime \cite{Antal}]
\label{thm:antal}
    If $p>1/2$ then there exists real constants $c,C,\rho>0$ (depending only on $p$) such that
    \begin{equation}
        \label{eq:antal}
        \forall x \in \mathbb{Z}^2, \ \Proba{0 \longleftrightarrow x, d_\chem(0,x)>\rho \norm{x}{}} < C\exp(-c\norm{x}{}).
    \end{equation}
\end{theorem}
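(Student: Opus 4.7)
The plan is to use a block renormalization argument, reducing the problem to a highly supercritical percolation on a coarser lattice. Fix $p > 1/2$. For an integer $N$ to be chosen large, partition $\mathbb{Z}^2$ into disjoint blocks $B_N(k) = (kN + [0,N)^2) \cap \mathbb{Z}^2$ for $k \in \mathbb{Z}^2$, and declare the coarse site $k$ to be \emph{good} if, inside $B_N(k)$ together with its immediate neighbors, there is a unique macroscopic open cluster of diameter of order $N$, this cluster is contained in the infinite cluster, and its restrictions to two adjacent good blocks are connected to each other by an open path of length $O(N)$ inside the union of the two blocks. The classical finite-size criterion of Pisztora (building on Grimmett--Marstrand), combining the uniqueness of the infinite cluster (Burton--Keane) with RSW-type crossing estimates in the supercritical regime, shows that for any $\eta > 0$ one can fix $N$ large enough that the process of good sites stochastically dominates Bernoulli site percolation of parameter $1 - \eta$ on $\mathbb{Z}^2$.

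Given this, to construct an open path from $0$ to $x$ I would consider the sequence of at most $C \norm{x}{}/N$ coarse blocks intersecting the straight segment $[0,x]$. If every block in this sequence were good, the $O(N)$ local connectors between adjacent good blocks would immediately assemble into an open path of length at most $C' \norm{x}{}$, giving $d_\chem(0,x) \leq C' \norm{x}{}$. In general one must reroute around bad blocks: since $1-\eta$ can be taken arbitrarily close to $1$, the bad sites on the coarse lattice form a highly subcritical cluster process, so by Peierls-type estimates the diameters of bad clusters have exponential tails. With exponentially high probability no bad cluster encountered along $[0,x]$ exceeds size $O(\log\norm{x}{})$, and the sum of the sizes of the bad clusters met along the segment concentrates, by a Cramér-type bound, around its mean $O(\eta \norm{x}{}/N)$.

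Each maximal run of bad blocks is then replaced by a detour through nearby good blocks, at a cost bounded by a constant times $N$ times the size of the corresponding bad cluster. Summing these excess lengths and absorbing the (exponentially small) probability that $0$ and $x$ are connected in a finite cluster of diameter comparable to $\norm{x}{}$ yields the desired bound $d_\chem(0,x) \leq \rho \norm{x}{}$ off an event of probability at most $C\exp(-c\norm{x}{})$, for some $\rho$ depending on $p$. The main obstacle is really the finite-size criterion itself, namely the passage from supercriticality $p > 1/2$ to a renormalized percolation of arbitrarily high density: this requires a careful gluing argument ensuring that the macroscopic crossings in adjacent blocks genuinely merge into a single cluster, and it is precisely the technical core of Antal--Pisztora's original work. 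Everything beyond that step is a comparatively soft Peierls-and-large-deviation argument on the coarse lattice.
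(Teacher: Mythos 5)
The paper does not prove Theorem~\ref{thm:antal}; it is imported verbatim from Antal and Pisztora \cite{Antal} as motivating background for the continuous analogue that is the paper's actual contribution, so there is no "paper's own proof" to compare against. Your sketch is, however, a faithful high-level reconstruction of the Antal--Pisztora argument: renormalize to a high-density process of good coarse blocks via Pisztora's finite-size criterion, assemble $O(N)$ local connectors along a sequence of good blocks following the geodesic, reroute around bad clusters, and control the total cost of detours by a Peierls/large-deviation argument on the coarse lattice.

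A few imprecisions worth flagging. First, citing RSW-type crossing estimates alongside Grimmett--Marstrand mixes two regimes: RSW is the planar, $d=2$ shortcut, while Antal--Pisztora work in general $d\geq 2$ where the finite-size criterion rests on slab percolation (Grimmett--Marstrand); you only need one of the two, depending on the dimension you are targeting. Second, the relevant quantity to sum along the geodesic is the \emph{volume} (number of coarse sites) of the bad clusters encountered, not merely their diameter, because the detour around a bad cluster costs a length proportional to $N$ times the size of its outer boundary, which is bounded by its volume; controlling only diameters would not directly give the linear bound. Third, the bad clusters met along a fixed segment are not independent, so the "Cram\'er-type" concentration is more honestly a greedy lattice-animal plus exponential Chebyshev argument (the standard route in \cite{Antal}), not a literal i.i.d. Cram\'er bound. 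None of these alters the architecture of the proof; as you correctly emphasize, the substantive input is the renormalization (finite-size criterion) itself, and the rest is soft combinatorics on the coarse lattice.
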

\begin{remark}
Note that if $p>p_c$ we have by the FKG inequality and using the unicity of the infinite cluster:
\begin{equation*}
    \Proba{0\longleftrightarrow x} \geq \Proba{0 \longleftrightarrow \infty}\Proba{x \longleftrightarrow \infty} \geq \theta(p)^2>0,
\end{equation*}
where $\theta(p)>0$ is the probability that the cluster of $0$ is unbounded (by definition, the event that $0$ belongs to the infinite cluster is $\{0\longleftrightarrow \infty\}$). So the fact that we have an exponential decay in \eqref{eq:antal} is really due to the constraint on the chemical distance.
\end{remark}
Theorem \ref{thm:antal} answers this question saying that, in the supercritical regime, the chemical distance behaves like the Euclidean distance up to some multiplicative constant with high probability. In some sense,     the infinite cluster will not present very big holes and will not contort itself too much.

Now let us go back to the main matter of this article. Although the above discussion was about a discrete percolation model, we will properly give sense to analogous definition and statements for some continuous percolation models. Those continuous models have received a lot of interest and development recently. Our main result is a similar (although weaker) statement of Theorem \ref{thm:antal} for planar Gaussian random field on $\R^2$.
\subsection{Notations and main result}
Throughout this paper, we consider $f$ a stationary, centered, continuous Gaussian field on $\R^2$ with covariance kernel $\kappa : \R^2 \to \R$ defined as
\begin{equation*}
    \kappa(x) = \mathbb{E}[f(x)f(0)].
\end{equation*}
Since the field $f$ is stationary, the covariance kernel $\kappa$ characterizes the law of the field. In fact, for any $x,y\in \R^2$ we have $ K(x,y) := \mathbb{E}[f(x)f(y)] = \kappa(x-y)$.
One important example of such fields is the so-called Bargmann-Fock field, whose covariance kernel is given by 
\begin{equation}
    \label{eq:cov_bf}
    \kappa_{BF}(x)= \exp\left(-\frac{1}{2}\norm{x}{}^2\right).
\end{equation}
In the expression above and in the following, $\norm{x}{}$ denotes the usual Euclidean norm of $x\in \R^2$.

To state our assumptions on the random field $f$, we will use the notations introduced in \cite{Threshold}, \cite{lcd3}, \cite{Severo}. We begin by introducing the \textit{spectral measure} $\mu$, which is the Fourier transform of $\kappa$. Since $f$ is continuous, such a measure exists by Bochner's theorem (see \cite{NS16} for a more in-depth explanation). We have the following formula for $x\in \R^2$:
\begin{equation*}
    \kappa(x)=\int_{\R^2} e^{i\scal{x}{s}}\mu(ds).
\end{equation*}
In the following, we will always assume that $\mu$ is absolutely continuous with respect to the Lebesgue measure and we write $\rho^2$ as the density of this measure. It is called the \textit{spectral density}. The reason why the existence of the spectral density is a fundamental tool for our and previous analysis (see \cite{Threshold}, \cite{Severo}, \cite{NS16}) is because it is a criterion for obtaining the existence of the white-noise representation of $f$, meaning that we can write $f$ as
\begin{equation}
    \label{eq:white_noise_repr}
    f=q\star W,
\end{equation}
for some $q \in L^2 (\R^2)$ satisfying $q(-x) = q(x)$, where $\star$ denotes the convolution and $W$ is a standard planar white-noise.
The function $q$ can be related to the spectral density $\rho^2$ by the following equation
\begin{equation*}
    q = \mathcal{F}(\rho),
\end{equation*}
where $\mathcal{F}(\cdot)$ denotes the Fourier transform. It is also known that in this case we have
\begin{equation*}
    \kappa = q\star q,
\end{equation*}
where $\star$ again denotes the  convolution. The reverse construction can be done as follows, if $q\in L^2(\R^2)$ is given such that $q(-x)=q(x)$, then $f:= q\star W$ will be a stationary centered planar Gaussian field with covariance kernel $\kappa = q\star q$.  For the Bargmann-Fock field with covariance kernel given by \eqref{eq:cov_bf} the function $q_{BF}$ can be computed exactly:
\begin{equation}
\label{eq:qBF}
    \begin{array}{ccccc}
         q_{BF} & : & \R^2 & \to & \R  \\
         & & x & \mapsto & \sqrt{\frac{2}{\pi}}e^{-\norm{x}{}^2}.
    \end{array}
\end{equation}
Several assumptions very similar to those in \cite{Threshold} or \cite{Severo} will be made in the present article.
\begin{assumption}
\label{assumption:a1} The function $q$ has the following properties:
\begin{itemize}
    \item $q\in L^2(\R^2).$
    
    \item $q(x)$ is isotropic (it only depends on $\norm{x}{}$).
    \item $q \geq 0.$
    \item $q$ is not identically equal to the zero function.
    \item The support of the spectral measure $\mu$ contains a non empty open set.
\end{itemize}
\end{assumption}
\begin{assumption}[Regularity, depends on some $m\geq 3$]\label{assumption:a2}
The function $q$ has the following regularity:
\begin{itemize}
\item $q\in \mathcal{C}^m(\R^2).$
\item $\forall |\alpha|\leq m,\  \partial^\alpha q \in L^2(\R^2).$
\end{itemize}
In the second condition, $\alpha=(\alpha_1,\alpha_2)$ is an element of $\mathbb{N}^2$ and by definition $|\alpha|=\alpha_1+\alpha_2.$
\end{assumption}
\begin{assumption}[Strong positivity]
\label{assumption:a3}
The function $q$ is non negative: $\forall x\in \R^2,\ q(x)\geq 0.$
\end{assumption}
\begin{assumption}[Decay of correlation, depends on some $\beta>0$]
\label{assumption:a4}
    There exists $C>0$ such that for any $x\in \R^2\setminus\{0\}$,
    \begin{equation*}
        \max\left\{q(x), \norm{\nabla q(x)}{}\right\} \leq C\norm{x}{}^{-\beta}.
    \end{equation*}
\end{assumption} We make a few comments about these assumptions.
Basically, the first element of Assumption \ref{assumption:a1} and the fact that $q$ is invariant under sign change (which is implied by the isotropy of $q$) allow us to rigorously define $q\star W$. The regularity of $q$ in Assumption \ref{assumption:a2} entails by dominated convergence that $\kappa$ will be $\mathcal{C}^{2m}$ differentiable, hence it allows us to see $f$ as a $\mathcal{C}^{m-1}$ differentiable function (in fact a $\mathcal{C}^{m-1}$ modification of $q\star W$). The Assumption \ref{assumption:a3} is called the \textit{strong positivity} hypothesis. The main purpose of this assumption is to make use of the FKG inequality. Note that it was conjectured in \cite{Threshold} that Assumption \ref{assumption:a3} could be replaced by the weaker hypothesis $q\star q \geq 0$ for their purpose. We also remark that all these hypothesis are verified (for any $\beta>0$ and $m\geq 3$) by the Bargmann-Fock field whose function $q$ is given by \eqref{eq:qBF}.

Now that we have introduced our planar random field $f$, we present the percolation model associated to it. For $\ell\in \mathbb{R}$ we define the excursion set at level $\ell$ as the random set
\begin{equation*}
    \excur(f) := \{x\in \R^2 | f(x)\geq -\ell\}.
\end{equation*}
This excursion set has been studied with regard to percolation theory \cite{BG16}, \cite{HA_critical} and is to be seen as the analog to the random graph in Bernoulli percolation. In particular, it was proved in \cite{HA_critical} for the Bargmann-Fock field and later in \cite{Threshold} and \cite{NOFKG} for more general Gaussian fields that there is a sharp transition at $\ell_c=0$, this is an analog of Kesten's Theorem \ref{thm:kesten}.
\begin{theorem}[Phase transition for the Bargmann-Fock field \cite{HA_critical}, \cite{Threshold}]
    \label{thm:kesten_continuous}
    Assume that $q$ satisfies Assumptions \ref{assumption:a1}, \ref{assumption:a2} for some 
    $m\geq 3$, \ref{assumption:a3} and \ref{assumption:a4} for some $\beta>2$.
    If $\ell<0$, then almost surely $\excur(f)$ only have bounded connected components.
    If $\ell>0$, then almost surely $\excur(f)$ admits a unique unbounded connected component.
\end{theorem}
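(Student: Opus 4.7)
The plan is to establish the phase transition by combining box-crossing (RSW) estimates at the self-dual point $\ell = 0$ with a sharp threshold argument in the parameter $\ell$.

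First, I would show that at $\ell = 0$ the crossing probability of a rectangle $[0, \rho n] \times [0, n]$ by $\excur(f)$ is bounded away from $0$ and from $1$, uniformly in $n$. The symmetry $f \stackrel{d}{=} -f$ (the field is centered) provides self-duality at level zero, since a horizontal crossing of a rectangle by $\{f \geq 0\}$ is, up to the boundary, complementary to a vertical crossing by $\{f < 0\}$. Combined with the FKG inequality (valid thanks to $q \geq 0$, Assumption \ref{assumption:a3}) and the approximate independence between well-separated regions that follows from Assumption \ref{assumption:a4}, one then obtains RSW-type crossing estimates in the spirit of \cite{HA_critical, Threshold}.

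Next, I would derive a sharp threshold for box-crossing probabilities using an OSSS-type inequality adapted to Gaussian fields. The white-noise representation \eqref{eq:white_noise_repr} allows one to discretize $f$ on a fine mesh by revealing averages of $W$ over small cells through a randomized algorithm; the OSSS inequality then bounds from below the derivative in $\ell$ of a monotone crossing probability by a ratio of variance to maximal revealment. The algebraic decay in Assumption \ref{assumption:a4} with $\beta > 2$ is what forces the maximal revealment to be small compared to the variance at every scale. Integrating the resulting differential inequality drives the crossing probability of $[0,n]^2$ to zero exponentially when $\ell < 0$ and to one when $\ell > 0$.

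These quantitative estimates yield both regimes. For $\ell < 0$, exponential decay of crossings of dyadic annuli together with Borel--Cantelli implies that almost surely every connected component of $\excur(f)$ is bounded. For $\ell > 0$, the fact that crossings of overlapping long rectangles have probability tending to one allows one to build, almost surely, an infinite sequence of concentric annuli simultaneously crossed by $\excur(f)$, and hence an unbounded connected component exists. Uniqueness is then obtained via a Burton--Keane trifurcation argument, relying on ergodicity of $f$ under planar translations (provided by the absolute continuity of the spectral measure) and on the mixing supplied by Assumption \ref{assumption:a4}. The main obstacle is the second step: transporting the OSSS machinery from Bernoulli or Boolean percolation to a Gaussian field with only polynomially decaying correlations requires a careful discretization and a delicate optimization between mesh size and truncation scale, and this is precisely where the quantitative assumption $\beta > 2$ enters.
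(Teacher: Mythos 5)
The paper does not prove Theorem~\ref{thm:kesten_continuous}; it is quoted verbatim as a known result from \cite{HA_critical} and \cite{Threshold}, so there is no ``paper's own proof'' to compare your attempt against. That said, your outline is a faithful reconstruction of the strategy used in those references: self-duality of the centered field at level $0$ together with FKG (from Assumption~\ref{assumption:a3}) and a quasi-independence input (from the decay in Assumption~\ref{assumption:a4}) to get an RSW box-crossing property; a sharp-threshold differential inequality in $\ell$ to push crossing probabilities to $0$ or $1$ away from $\ell_c=0$; Borel--Cantelli on circuits in dyadic annuli for both the subcritical (all clusters bounded) and supercritical (existence of an unbounded cluster) conclusions; and a Burton--Keane trifurcation argument, together with ergodicity coming from the absolutely continuous spectral measure, for uniqueness.

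Two small calibration remarks. First, the sharp-threshold step in the original references is not uniformly an OSSS argument: the Kahn--Kalai--Linial/Talagrand route is also used in this literature, and a clean OSSS implementation for Gaussian fields with only polynomial correlation decay requires a truncation-plus-discretization scheme whose details are nontrivial; you correctly flag this as the delicate point, but ``integrating the differential inequality'' does not by itself give exponential decay of crossing probabilities -- one still needs a renormalization/bootstrap step after the differential inequality puts you strictly on one side of criticality. Second, the constraint $\beta>2$ in Assumption~\ref{assumption:a4} is what makes the quasi-independence estimates summable over scales; it is worth being explicit that this is where that hypothesis is consumed rather than in the revealment bound alone. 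None of these affect the soundness of the high-level plan, which matches the cited proofs.
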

In particular, if $\ell>0$ we are in the supercritical regime and we observe the existence of a unique unbounded component in the excursion set. The same question of understanding the geometry of this set arises. Let us introduce a few notations and definitions.
\begin{definition}
For any $\ell\in \mathbb{R}$ and $x,y\in \R^2$ we introduce the event $\left\{x\connects y\right\}$ that the two points $x$ and $y$ are in the same connected component of $\excur(f)$.
\end{definition}
\begin{definition}
\label{def:dchem_continous}
For any $\ell\in \mathbb{R}$ and $x,y\in \R^2$ we introduce the chemical distance between $x$ and $y$ as
\begin{equation*}
    \dchem(x,y) := \left\{\begin{array}{cc}
        \inf\{\text{length}(\gamma)\ | \ \gamma\in \Gamma(x,y, \excur(f))\} & \text{if }x\connects y \\
        \infty & \text{otherwise} 
    \end{array}\right.,
\end{equation*}
where $\Gamma(x,y,\excur(f))$ denotes the set of continous and rectifiable paths $\gamma : [0,1] \to \R^2$ such that $\gamma(0)=x,\  \gamma(1)=y$ and such that $\gamma$ takes values in $\excur(f)$, and $\text{length}(\gamma)$ denotes the Euclidean length of the path $\gamma$.
\end{definition}
\begin{remark}
The fact that we can find some rectifiable path $\gamma$ between two points $x$ and $y$ when they are connected stems from the fact that the field $f$ is smooth and the excursion set $\excur(f)$ will be a 2-dimensional smooth submanifold with boundary of $\R^2$ (see Lemma \ref{lemma:smooth} for a more rigorous statement).
\end{remark}
\begin{remark}
Definition \ref{def:dchem_continous} is to be understood as the analog for continuous percolation of Definition \ref{def:dchem_discrete}.
\end{remark}
Finally we can state our main result.
\begin{theorem}
    \label{thm:principal}
    Assume that $q$ satisfies Assumptions \ref{assumption:a1}, \ref{assumption:a2} for some $m\geq 3$, \ref{assumption:a3} and \ref{assumption:a4} for some $\beta>2$. Then, for any $\ell>0$, for any $0<\delta<1$ we have a constant $C>0$ (depending on $\ell, q, \beta, m, \delta$) such that for all $x\in \R^2$ with $\norm{x}{}>3$ the following is satisfied:
    \begin{equation}
        \label{eq:main_thm1}
        \Proba{0\connects x \text{ and }\dchem(0,x)>\norm{x}{}\log^{\frac{3}{2}+\delta}\norm{x}{}}\leq C\frac{\log^{\frac{m-1}{2}}\norm{x}{}}{\norm{x}{}^m}.
        \end{equation}
\end{theorem}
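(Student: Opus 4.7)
I would follow the spirit of the proof of Antal--Pisztora's Theorem \ref{thm:antal} via a block/renormalization argument, adapted to the Gaussian setting. Fix a mesoscopic scale $R = R(\|x\|)$ of logarithmic order in $\|x\|$, and tile $\R^2$ by boxes $B_R(z)$ of side $R$ indexed by $z\in R\Z^2$. Declare a box $B_R(z)$ \emph{good} if: (i) there is a crossing cluster of $B_R(z)\cap \excur(f)$ touching all four sides of the slightly enlarged box $B_{2R}(z)$, (ii) this crossing cluster has internal chemical diameter inside $B_{2R}(z)$ bounded by $R\,(\log R)^{1/2+\delta/3}$, and (iii) crossings of neighbouring good boxes are forced to merge (an FKG/sprinkling condition at level $\ell'<\ell$). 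On a path of good boxes linking a box near $0$ to a box near $x$, the chemical distance $\dchem(0,x)$ is bounded by the length of the renormalized path (order $\|x\|/R$) times the per-box cost $R(\log R)^{1/2+\delta/3}$.

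\textbf{Reduction to a finite-range model.} The main difficulty is that $f$ has infinite-range correlations, so good-box events on disjoint boxes are not independent. Following the approach used in \cite{Threshold, Severo}, I would use the white-noise representation $f = q\star W$ and decompose $f = f_L + g_L$, where $f_L$ keeps only the contribution of white noise within distance $L$ of each point. Then $f_L$ is $2L$-dependent, so excursions of $f_L$ on $L$-separated boxes are independent. Using Assumption \ref{assumption:a4} with $\beta>2$, the Assumption \ref{assumption:a2} regularity, and a Borell--TIS / Dudley bound on the supremum of $g_L$ on the macroscopic box of diameter $\|x\|$, one can choose $L$ of the order $\log\|x\|$ so that $\{\|g_L\|_\infty\le\varepsilon\}$ on that box holds except on an event of probability at most $C\log^{(m-1)/2}\|x\|/\|x\|^m$. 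This is precisely the form of the error term in \eqref{eq:main_thm1}, so $L$ must be tuned to generate exactly this bound: on the good truncation event, $\{f_L\ge -\ell+\varepsilon\}\subseteq\{f\ge-\ell\}$, so we may work with the finite-range model $f_L$ at level $\ell-\varepsilon>0$.

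\textbf{Good-box probabilities and Antal--Pisztora on the renormalised lattice.} To show good boxes are abundant, I would combine the RSW and sharp-threshold estimates of \cite{HA_critical, Threshold} (which give crossing probabilities tending to $1$ as $R\to\infty$ for $\ell>0$), the FKG inequality granted by Assumption \ref{assumption:a3}, and an \emph{internal} chemical-distance estimate for a crossing of a box of side $R$ at level $\ell-\varepsilon$. The last estimate is established by bootstrapping: one proves inductively that at scale $R$ a crossing has chemical length at most $R(\log R)^{1/2+\delta/3}$ except with probability $\lesssim (\log R)^{(m-1)/2}/R^m$, by repeating the block argument one scale smaller. Since $f_L$ is $2L$-dependent and $R\gg L$, the Liggett--Schonmann--Stacey comparison theorem implies that the family of good boxes dominates a Bernoulli site percolation of parameter arbitrarily close to $1$. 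Applying Antal--Pisztora's Theorem \ref{thm:antal} to the good boxes yields a renormalised path of length $O(\|x\|/R)$ joining the box containing $0$ to the one containing $x$; gluing the per-box internal paths produces a continuous path in $\excur(f)$ of total length $\lesssim \|x\|(\log R)^{1/2+\delta/3}\lesssim \|x\|\log^{3/2+\delta}\|x\|$ (the extra logs come from the factor $R$ and the logarithmic ratio between $R$ and $L$).

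\textbf{Main obstacle and the endpoints.} The hardest step is the internal chemical-distance bound inside a single box, because it is circular: controlling chemical distances at scale $R$ formally requires controlling them at smaller scales. Handling this via a dyadic induction on scales, with probability budgets summed geometrically, is the technical core of the proof. A secondary difficulty is the glueing at the endpoints: one must connect $0$ and $x$ to the crossing clusters of their respective good boxes on the event $\{0\connects x\}$. This is a local event at scale $O(1)$ and can be controlled by a direct FKG argument combined with the smoothness of $f$ (via Lemma \ref{lemma:smooth}), at the price of a constant factor absorbed into $C$.
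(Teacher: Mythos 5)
Your route (finite-range truncation of the white noise, renormalised good boxes, Liggett--Schonmann--Stacey domination, then Antal--Pisztora on the renormalised lattice) is genuinely different from the paper's, which instead builds a single \emph{thin rectangle} of width $l(x)=\log^{1+\delta}x$ around the segment $[0,x]$, works with a piecewise-constant discretisation $f^\varepsilon$ at a sprinkled level $\ell'=\ell/2$, and shows that the crossing-plus-two-circuits structure in that rectangle appears with high probability. There is no block renormalisation in the paper. But the crucial divergence, and the real gap in your proposal, is how you propose to control chemical distance at \emph{small} scales. You write that the internal chemical-distance bound inside a single box is circular and would be handled by ``a dyadic induction on scales, with probability budgets summed geometrically,'' and that the endpoint glueing is ``a local event at scale $O(1)$'' controllable by FKG. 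Neither of these resolves the actual difficulty: in the continuum the field can oscillate at arbitrarily fine scales, so a dyadic induction has no base case to stop at, and FKG only controls the probability that a connection \emph{exists}, not the length of paths realising it. The paper states explicitly that it could not find such a bootstrapping argument and instead uses a Kac--Rice moment bound on $L(f,\ell,B)$, the length of the nodal set in a unit box, together with a deterministic lemma (Lemma \ref{lemma:grid}) showing that the chemical diameter of a connected component $\mathcal{C}$ of $\excur(f)\cap B$ is at most $2\,\text{length}(\partial\mathcal{C})$. This is what converts the analytic input (finiteness of the $(m-1)$-th moment of the nodal length, via Assumption \ref{assumption:a2}) into a chemical-distance bound with no appeal to a smaller scale.

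Closely related to this gap is a misdiagnosis of where the error rate $C\log^{(m-1)/2}\|x\|/\|x\|^m$ comes from. You propose to reverse-engineer it by tuning the truncation length $L$ so that the Borell--TIS tail of $\|g_L\|_\infty$ on the macroscopic window matches this bound. But that tail is sub-Gaussian in $L$, so for any reasonable choice of $L=O(\log\|x\|)$ it decays much faster than any power of $\|x\|$; it cannot be the dominant term. In the paper the polynomial rate is forced by the Markov inequality applied to $\E{S(B)^{m-1}}$ via the Kac--Rice estimate, i.e.\ by the limited regularity $m$ of $q$ controlling how many moments of the nodal length exist. If you could actually carry out your renormalisation with exponential bounds at every stage, you would prove a strictly stronger (exponential) result, which the authors were unable to obtain precisely because of the local-oscillation issue. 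So your proposal as written both leaves the hardest step unaddressed and predicts the wrong provenance for the error term; to repair it you would need to supply, at the base of the induction and at the endpoints, exactly the kind of deterministic geometric + Kac--Rice input the paper develops in Section \ref{sec:local_struct}.
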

That is, we show that in some sense the chemical distance between two points behaves almost like the Euclidean distance when those two points are far away.
\begin{remark}
For the Bargmann-Fock field, Assumption \ref{assumption:a2} is verified for any $m\geq 3$. Hence Theorem \ref{thm:principal} implies that the probability in \eqref{eq:main_thm1} has super-polynomial decay. However, remark that the constant $C>0$ in the statement of the theorem depends a priori on the value of $m$.
\end{remark}

\subsection{A few words about Theorem \ref{thm:principal} and strategy of the proof}
Here are some comments about our main theorem. To begin with, the resemblance between the two Theorems \ref{thm:antal} and \ref{thm:principal} is clear, the statements of both theorems can be reformulated in words as follows: in the supercritical phase, the chemical distance behaves almost like the Euclidean distance with high probability. Yet, one may notice two main differences in the statements of the two theorems. One is that we observe an additional logarithm term in our theorem that wasn not there in the theorem of \cite{Antal} for Bernoulli percolation. This is due to essentially two factors. First is the fact that while there is a minimal scale in Bernoulli percolation (an edge is of length $1$, there is no smaller scale), this is a priori not the case in continuous percolation, the field $f$ could locally oscillate and contort a lot, creating an unexpected large chemical distance in a box of size $1$. Second is that the field $f$ can have long-range correlation, $f(x)$ and $f(y)$ are correlated even when $x$ and $y$ are far away.
Another difference is that while we obtain an exponential decay for the probability in Theorem \ref{thm:antal}, we only get a bound that may decrease very slowly for small values of $m$. This again is due to the same problem of possible oscillations on arbitrary small scales which prevents us to have an easy control on the chemical distance around a point.

Now, let us present the strategy of the proof. It differs quite a lot from the one for Bernoulli percolation in \cite{Antal}. By isotropy of the field we may assume that we want to connect the origin $(0,0)$ to some point $(x,0)$. First of all we will build with high probability what we call a \textit{global structure} around those two points and that is contained in a "thin" rectangle (see Figure \ref{fig:global_structure}). This structure is composed of the following parts: two circuits contained in $\excur(f)$, one around $(0,0)$ and one around $(x,0)$ and a connection (also in $\excur(f)$) between those two circuits. All this structure is contained in a thin rectangle of length linear in $x$. The purpose of this global structure is that under the event $\left\{(0,0)\connects (0,x)\right\}$ we will be able to build a path in $\excur(f)$ joining $(0,0)$ and $(x,0)$ that is contained within the thin rectangle. Such a path is a good candidate for having a quasi linear length in term of Euclidean distance since it is contained in this rectangle. The details of the construction of this structure are given in Section \ref{sec:globa_struct}. In particular we build the global structure at a harder level $\ell'<\ell$, this will allow us to thicken a little the circuits and the connection between the circuits to ensure that two points within the global structure have a reasonable chemical distance between them. Next, we consider the portions of the path between $(0,0)$ and $(x,0)$ before it reaches the global structure. Because of the existence of arbitrary small scales where the field $f$ can oscillate, it would be possible to observe such portions of path of arbitrary long length. We did not find an easy argument to handle this, so we propose an argument relying on a Kac-Rice formula and a deterministic geometric argument to counter this problem in Section \ref{sec:local_struct}, the technical aspects of this section will be delayed to the end of Section \ref{sec:proof_of_main_thm}. The estimate we obtain is, however, not good enough to guarantee an exponential decay in the probability, this is where we lose the strong decay. With these two ideas we will be ready to prove Theorem \ref{thm:principal} in Section \ref{sec:proof_of_main_thm}.

\begin{remark}
We believe that the main theorem may hold in higher dimension but we would need additional arguments like for example the property of strong percolation.
\end{remark}

\paragraph{Acknowledgements: }I am very grateful to my PhD advisor Damien Gayet for introducing me to this subject as well as for his many advices. I would also like to thank Hugo Vanneuville for helpful conversations.

\section{Construction of the global structure}
\label{sec:globa_struct}
In this section, we build a global structure around the two points $(0,0)$ and $(x,0)$ that will allow us to shorten existing paths in the excursion set joining those two points. We basically build this global structure using well-known crossing events and we ensure those crossing are thick enough by working with a discretization of our field.
\subsection{Discretization of the field}
We begin with a classical definition of a discretization of the field $f$ that will be used later on. The motivavtion is the following: in our context of continuous percolation, even if one knows that there exists a path in $\excur(f)$ between two points $x$ and $y$ and also knows that this path is contained in a box of some fixed size, this does not  give any deterministic control on the Euclidean length of this path. In fact, contrary to Bernouilli percolation where the minimal scale is $1$ (the length of an edge is $1$), a new problem that arises in continuous percolation is that the field could oscillates a lot, the nodal set $\mathcal{Z}_\ell(f) := \{f=-\ell\}$ could contort itself a lot, causing an unexpected large chemical distance between two points. In this section we are interested in building a large structure around two points $(0,0)$ and $(x,0)$. This will be done by building said structure at a harder level $0<\ell'<\ell$ and then using a general argument to say that if the structure exists at level $\ell'$ then at the easier level $\ell$ it will still exist and will be thickened a little. That will allow us to have control on the chemical distance between any two points of this global structure.
Although we could work only with the field $f$ and study constraints on the second derivative of $f$ for such a good behaviour to happen. We prefer to make use of a discretization $f^\varepsilon$ of the field $f$, and apply concentration estimates to recover information about $f$ from $f^\varepsilon$. The precise discretization procedure is given in the following definition.
\begin{definition}
\label{def:fepsilon}
Suppose a function $g : \R^2 \to \R$ is given, as well as a parameter $\varepsilon>0$. We define $g^\varepsilon : \R^2 \to \R$ to be the function defined on $\R^2$ by
\begin{equation*}
    \forall x\in \varepsilon \Z^2, \forall y\in \left[-\frac{\varepsilon}{2}, \frac{\varepsilon}{2}\right[^2,\ g^\varepsilon(x+y):=g(x).
\end{equation*}
\end{definition}\noindent
The function $g^\varepsilon$ is piecewise constant on each small square of side-width $\varepsilon$ and centered at some $x\in \varepsilon\ZZ$. If $g$ is continuous, the function $g^\varepsilon$ is intuitively a good approximation of the function $g$ when $\varepsilon$ is small.
This intuition can me made rigorous in the following sense for our random field $f$.

\begin{proposition}[\cite{Severo} Proposition 2.1]
\label{prop:concentration_inequality}
Assume that $q$ satisfies Assumptions \ref{assumption:a1}, \ref{assumption:a2} for some $m \geq 3$ and \ref{assumption:a4} for some $\beta > 1$. Then there exists some constant $c>0$ (depending only on $q$) such that for all $\varepsilon>0$, for all $s\geq \varepsilon$
    \begin{equation}
     \Proba{\sup_{y\in \R^2, \ \norm{y}{}\leq 1}|f^\varepsilon(y)-f(y)| \geq s} \leq \exp(-cs^2\varepsilon^{-2}). \label{eq:concentration_3}
    \end{equation}
\end{proposition}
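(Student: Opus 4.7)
The plan is to treat $Y_y := f(y) - f^\varepsilon(y)$ as a centered Gaussian process on the compact set $K := \overline{B(0,1)}$ and apply the Borell--TIS concentration inequality, which requires bounding both the pointwise variance and the expected supremum of $|Y|$.

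For the pointwise variance, let $y_\varepsilon \in \varepsilon\Z^2$ denote the lattice point whose Voronoi cell contains $y$, so that $Y_y = f(y) - f(y_\varepsilon)$ and
$$\text{Var}(Y_y) = 2\bigl(\kappa(0) - \kappa(y-y_\varepsilon)\bigr).$$
Under Assumption~\ref{assumption:a2} with $m \geq 3$, the covariance $\kappa = q \star q$ is $\mathcal{C}^6$, and $\nabla\kappa(0) = 0$ since $\kappa$ attains its maximum at the origin; a second-order Taylor expansion therefore gives $\sigma^2 := \sup_{y\in K}\text{Var}(Y_y) \leq C_1 \varepsilon^2$ with $C_1 = C_1(q)$. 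For the expected supremum, I would apply the mean value inequality to a $\mathcal{C}^1$ modification of $f$:
$$\sup_{y\in K}|Y_y| \leq \frac{\varepsilon\sqrt{2}}{2}\sup_{z\in \overline{B(0,2)}} \norm{\nabla f(z)}{},$$
and then invoke Dudley's entropy bound on the smooth centered Gaussian field $\nabla f$ (whose increments are controlled by $\partial^2\kappa$) to obtain $\E{\sup_{z\in \overline{B(0,2)}} \norm{\nabla f(z)}{}} \leq C_2$ independently of $\varepsilon$, hence $\E{\sup_{y\in K}|Y_y|} \leq C_3 \varepsilon$.

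Plugging the two bounds into Borell--TIS yields, for $s \geq 2C_3 \varepsilon$,
$$\Proba{\sup_{y\in K}|Y_y| \geq s} \leq \exp\!\left(-\frac{(s-C_3\varepsilon)^2}{2C_1\varepsilon^2}\right) \leq \exp\!\left(-\frac{s^2}{8C_1\varepsilon^2}\right),$$
which is the desired inequality after renaming $c := 1/(8C_1)$. In the leftover range $\varepsilon \leq s \leq 2C_3\varepsilon$ the ratio $s^2/\varepsilon^2$ stays bounded by a constant, so the stated bound holds trivially by shrinking $c$; the case $\varepsilon > 1$ reduces to a pointwise Gaussian tail estimate on the finitely many values $f^\varepsilon$ takes on $K$.

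The main obstacle is the expected-supremum step: securing an $O(\varepsilon)$ bound on $\E{\sup_K|Y_y|}$ that is uniform in $\varepsilon$. This is exactly what Assumption~\ref{assumption:a2} with $m \geq 3$ delivers, ensuring that $\nabla f$ has enough regularity for Dudley's entropy bound to produce a finite, $\varepsilon$-independent estimate. With weaker regularity of $q$ one would at best obtain an $\varepsilon\sqrt{\log(1/\varepsilon)}$ bound, producing concentration with a logarithmic correction rather than the clean Gaussian tail stated in the proposition.
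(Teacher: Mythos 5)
This proposition is quoted from [Severo, Prop.~2.1] and is not proved in this paper, so I am judging your argument against what the standard route (which Severo almost certainly follows) would be. Your plan — bound $\sup_y\mathrm{Var}(f(y)-f^\varepsilon(y))$ by $O(\varepsilon^2)$ via a Taylor expansion of $\kappa$ at $0$, bound $\E{\sup_K|f-f^\varepsilon|}$ by $O(\varepsilon)$ via the gradient, and then invoke Borell--TIS — is the right and standard one. The variance step is fine (use the mean-value form of Taylor's theorem rather than a local expansion, so that $\kappa(0)-\kappa(h)\le \frac12\norm{D^2\kappa}{\infty}\norm{h}{}^2$ holds for every $h$ and not only small ones), and the expected-supremum step is fine for $\varepsilon$ bounded, with the $\varepsilon>1$ case handled separately as you indicate.

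There is, however, a genuine gap in the endgame. For $s\ge 2C_3\varepsilon$ your Borell--TIS computation gives the desired bound with $c=1/(8C_1)$; but for $\varepsilon\le s<2C_3\varepsilon$ your claim that ``the stated bound holds trivially by shrinking $c$'' is not correct. Shrinking $c$ only pushes the right-hand side $\exp(-cs^2\varepsilon^{-2})$ \emph{up towards} $1$ (never reaching it), whereas nothing you have proved prevents the left-hand side from being arbitrarily close to $1$ in that range. Note that $C_3\approx \frac{1}{\sqrt2}\,\E{\sup_{B(0,2)}\norm{\nabla f}{}}$, which is typically $>1$, so the range $[\varepsilon,2C_3\varepsilon]$ is nonempty and cannot be dismissed. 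What is actually needed is the uniform-in-$\varepsilon$ bound
\begin{equation*}
\sup_{\varepsilon>0}\ \Proba{\sup_{\norm{y}{}\le 1}|f(y)-f^\varepsilon(y)|\ge \varepsilon}\ \le\ p^\ast<1,
\end{equation*}
after which one may choose $c\le \min\bigl(1/(8C_1),\,-\ln(p^\ast)/(4C_3^2)\bigr)$. This bound is true but is an additional step: for $\varepsilon\le 1$ it reduces, via your gradient estimate, to $\Proba{\sup_{B(0,2)}\norm{\nabla f}{}\ge\sqrt2}<1$, which requires the positive small-ball probability of the smooth Gaussian field $\nabla f$ (a standard Cameron--Martin/support-theorem fact, but not a triviality), and for $\varepsilon$ large it follows from Gaussian tails on finitely many values. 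You should supply this argument explicitly; as written, the proof does not establish the inequality on the full range $s\ge\varepsilon$ demanded by the proposition.
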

This proposition ensures that we can recover information about $f$ from information about $f^\varepsilon$. However, as we will see in the upcoming sections, this need to be done by using a little sprinkling of the level $\ell$.
\subsection{Crossing events}
We now briefly remind the reader of the definition of a crossing event. In the following, we consider $\mathcal{R}=[0,a]\times [0,b]$ a rectangle in the ambient space $\R^2$. When considering the intersection $\excur(f)\cap \mathcal{R}$ we obtain a set with a certain number of connected components. We say that there is a \textit{crossing} of $\mathcal{R}$ by $\excur(f)$ if among those connected components there is at least one that intersects both $\{0\}\times [0,b]$ and $\{a\}\times [0,b]$. We define the event $\{f\in \text{Cross}_\ell(\mathcal{R})\}$ as the event that the rectangle $\mathcal{R}$ is crossed by $\excur(f)$.
\begin{figure}
\centering
\includegraphics[width=10cm]{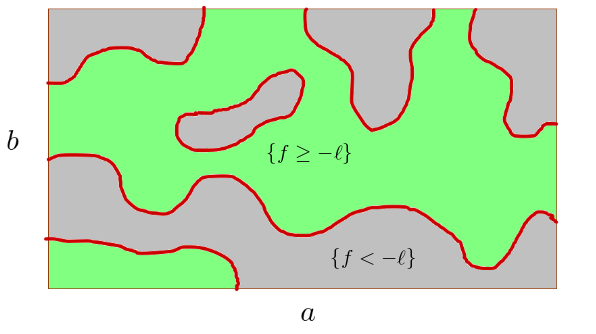}
\caption{Representation of the event $\text{Cross}_\ell(\mathcal{R})$}.
    \label{fig:crossing_event}
\end{figure}In \cite{Threshold}, estimates are obtained for the probability of such events in the supercritical regime.

\begin{theorem}[\cite{Threshold}]
	\label{thm:prob_crossing_event}
    Assume $q$ satisfies Assumptions \ref{assumption:a1}, \ref{assumption:a2} for some $m\geq 3$, \ref{assumption:a3} and \ref{assumption:a4} for some $\beta > 2$. If $\ell>0$ and $a,b>0$ are given, there exists a constant $c>0$ such that
    \begin{equation*}
        \forall \lambda\geq 1,\ \Proba{f\in \emph{\text{Cross}}_\ell(\lambda a,\lambda b)}\geq 1-e^{-c\lambda}.
    \end{equation*}
\end{theorem}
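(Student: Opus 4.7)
The plan is to deduce this from the sharpness of the phase transition established in \cite{Threshold} and upgraded to an exponential rate via a block renormalization argument. Since $\ell>0$ is strictly above the critical threshold $\ell_c=0$, sharpness of the phase transition (the input used to prove Theorem \ref{thm:kesten_continuous}) guarantees that for any prescribed $\eta>0$ one can find a fixed scale $R=R(\eta,\ell,q)$ such that the crossing probability of the reference square $[0,R]\times[0,R]$ at level $\ell$ exceeds $1-\eta$. The task is then to turn this fixed-scale statement into an estimate that is exponentially close to $1$ as a function of the scaling parameter $\lambda$.

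First, I would use a Russo--Seymour--Welsh (RSW) type bootstrap, available here because Assumption \ref{assumption:a3} provides the FKG inequality for the field $f$, to propagate a high crossing probability from squares to rectangles of any fixed aspect ratio $a/b$. Concretely, combining horizontal and vertical crossings of nearby squares through FKG yields the crossing of an arbitrarily long rectangle of fixed aspect ratio, with probability at least $1-\eta'$ where $\eta'=\eta'(\eta,a,b)$ can still be taken arbitrarily small by enlarging $R$.

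Next I would run a block renormalization: tile $[0,\lambda a]\times[0,\lambda b]$ along its long direction with $\sim\lambda$ slightly overlapping ``reference'' rectangles of the shape constructed in the previous step, arranged so that, by FKG, simultaneous horizontal crossings of all reference rectangles (possibly glued by short vertical crossings in the overlaps) force the event $\{f\in\text{Cross}_\ell(\lambda a,\lambda b)\}$. This gives $\sim\lambda$ sub-events, each of probability at least $1-\eta'$, whose simultaneous occurrence implies the desired full crossing, so it remains to prove that the probability of the intersection is at least $1-e^{-c\lambda}$.

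The main obstacle sits exactly in this final step: because $f$ has only polynomial decay of correlations (Assumption \ref{assumption:a4}), the $\sim\lambda$ sub-crossing events are not independent, and FKG goes the wrong way for the complement event that is needed for a union bound. The standard way around this, which is the one I expect to carry out, is to exploit the white-noise representation $f=q\star W$ from \eqref{eq:white_noise_repr}: split $W$ into contributions $W_i$ supported in disjoint neighborhoods of the $i$-th reference rectangle, producing genuinely independent local fields $f_i=q\star W_i$, and control the ``far-field'' correction $f-\sum_i f_i$ by a Gaussian concentration estimate in the spirit of Proposition \ref{prop:concentration_inequality}, at the price of a small sprinkling of the level $\ell$. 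Once effective independence between the $\lambda$ sub-events is established, the probability that they all fail is bounded by $(1-\eta'')^{\lambda}\leq e^{-c\lambda}$ for an appropriate $c=c(q,\ell,a,b)>0$, which yields the announced bound.
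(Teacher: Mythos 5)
The paper does not prove this statement at all: it is quoted verbatim from \cite{Threshold} and used as a black box, so there is no ``paper's own proof'' to compare against. What can be assessed is whether your reconstruction would actually yield the bound, and there is a genuine gap in the final step.

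The renormalization you set up proves the wrong inequality. You tile the long rectangle with roughly $\lambda$ reference rectangles whose simultaneous crossing forces the full crossing, and you arrange for each reference crossing to have probability at least $1-\eta'$. Even granting perfect independence after the white-noise decoupling (which is itself nontrivial, but fine), the intersection of $\lambda$ events each of probability $1-\eta'$ has probability $(1-\eta')^{\lambda}\approx e^{-\eta'\lambda}$: that is a quantity that is \emph{exponentially small}, not exponentially close to $1$. The complementary (union-bound) estimate gives $\Proba{\text{no full crossing}} \le \lambda\,\eta'$, which is linear in $\lambda$ and cannot be turned into $e^{-c\lambda}$ by any choice of $\eta'$ independent of $\lambda$. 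Your concluding sentence — ``the probability that they all fail is bounded by $(1-\eta'')^{\lambda}\le e^{-c\lambda}$'' — mixes up ``all fail'' and ``at least one fails''; the event relevant to the full crossing failing is the latter, and its probability is not $(1-\eta'')^{\lambda}$.

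The missing idea is planar duality combined with subcritical exponential decay, which is how the exponential rate actually arises in \cite{Threshold}. For a nondegenerate smooth field, the failure of a left-to-right crossing of $[0,\lambda a]\times[0,\lambda b]$ by $\excur(f)$ is equivalent to the existence of a top-to-bottom crossing of the same rectangle by the closed complement $\{f\le -\ell\}$. By symmetry of the Gaussian law, $\{f\le -\ell\}$ has the same distribution as $\{f\ge \ell\}=\mathcal{E}_{-\ell}(f)$, which, since $\ell>0$, is the \emph{subcritical} excursion set. Sharpness of the phase transition (the OSSS/sprinkling argument of \cite{Threshold}) gives exponential decay of crossing probabilities in the subcritical phase, namely $\Proba{f\in\text{Cross}_{-\ell}(\lambda b,\lambda a)}\le e^{-c\lambda}$, and this is precisely $\Proba{f\notin\text{Cross}_\ell(\lambda a,\lambda b)}\le e^{-c\lambda}$. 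The block renormalization, the white-noise decomposition and the sprinkling you describe are indeed the tools used in \cite{Threshold}, but they are deployed to prove subcritical exponential decay, and duality is what transfers that into the supercritical lower bound stated in the theorem; without that transfer your scheme stalls at a linear union bound.
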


Although not stated in \cite{Threshold}, it can be easily shown that the same statement holds for $f^\varepsilon$ (see Definition \ref{def:fepsilon}) if $\varepsilon$ is small enough.
\begin{proposition}
\label{prop:prob_crossin_event_epsilon}
Assume $q$ satisfies Assumptions \ref{assumption:a1}, \ref{assumption:a2} for some $m \geq 3$, \ref{assumption:a3} and \ref{assumption:a4} for some $\beta > 2$.
    If $\ell>0$ and $a,b>0$ are given, there exist constants $c>0, \varepsilon_0>0$ such that,
    \begin{equation*}
        \forall 0<\varepsilon<\varepsilon_0,\ \forall \lambda\geq 1,\ \Proba{f^\varepsilon \in \emph{\text{Cross}}_\ell(\lambda a,\lambda b)}\geq 1-e^{-c\lambda}.
    \end{equation*}
\end{proposition}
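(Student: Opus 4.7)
My plan is a sprinkling argument. Fix an intermediate, strictly harder level $\ell' := \ell/2 \in (0, \ell)$. The deterministic key observation is: if $f$ admits a continuous crossing path $\gamma \subset \mathcal{R}_\lambda := [0, \lambda a]\times[0, \lambda b]$ at level $\ell'$ (so $f \circ \gamma \geq -\ell'$) and simultaneously $\sup_{y \in \mathcal{R}_\lambda}|f(y) - f^\varepsilon(y)| \leq \ell - \ell' = \ell/2$, then every $\varepsilon$-cell $C$ containing a point $y$ of $\gamma$ satisfies $f^\varepsilon|_C = f(x_C) \geq f(y) - \ell/2 \geq -\ell$, where $x_C$ is the centre of $C$. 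The union of these cells, intersected with $\mathcal{R}_\lambda$, is then a connected subset of $\excur(f^\varepsilon)\cap\mathcal{R}_\lambda$ containing $\gamma$, hence provides a crossing of $\mathcal{R}_\lambda$ by $f^\varepsilon$ at level $\ell$. Consequently,
\begin{equation*}
\Proba{f^\varepsilon \notin \text{Cross}_\ell(\mathcal{R}_\lambda)} \leq \Proba{f \notin \text{Cross}_{\ell/2}(\mathcal{R}_\lambda)} + \Proba{\sup_{\mathcal{R}_\lambda}|f - f^\varepsilon| > \ell/2}.
\end{equation*}

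The first term is $\leq e^{-c_1 \lambda}$ by Theorem \ref{thm:prob_crossing_event} applied to $f$ at the positive level $\ell/2$. For the second, one covers $\mathcal{R}_\lambda$ by $O(\lambda^2)$ translates of the unit ball, applies Proposition \ref{prop:concentration_inequality} on each (using stationarity of $f$), and union-bounds to get, for $\varepsilon \leq \ell/2$,
\begin{equation*}
\Proba{\sup_{\mathcal{R}_\lambda}|f - f^\varepsilon| > \ell/2} \leq C\lambda^2 \exp\!\bigl(-c\ell^2/(4\varepsilon^2)\bigr).
\end{equation*}

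This bound is only polynomial in $\lambda$, whereas the claim requires $e^{-c\lambda}$. The natural remedy is to perform the sprinkling on a \emph{single fixed} reference rectangle $R_0 = [0, a_0]\times[0, b_0]$: there the $\lambda$-issue disappears and, for any prescribed $\delta > 0$, a suitably small choice of $\varepsilon_0$ yields $\Proba{f^\varepsilon \in \text{Cross}_\ell(R_0)} \geq 1 - \delta$ uniformly in $\varepsilon \in (0, \varepsilon_0)$. One then combines such fixed-scale box crossings into a crossing of $\mathcal{R}_\lambda$ through the standard Russo--Seymour--Welsh / FKG chaining, exactly as in the proof of Theorem \ref{thm:prob_crossing_event} in \cite{Threshold}. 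The FKG inequality is available by Assumption \ref{assumption:a3}, while stationarity and the polynomial decorrelation of Assumption \ref{assumption:a4} are inherited by $f^\varepsilon$.

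The main technical obstacle is precisely the mismatch between the $\lambda^2$ prefactor from the covering argument and the exponential decay $e^{-c\lambda}$ required. The two-step strategy above resolves this by decoupling the scales: the discretization error is handled on a \emph{fixed} reference rectangle (where $\lambda$ plays no role), and the macroscopic decay in $\lambda$ is then recovered by a purely percolative chaining argument that is blind to $\varepsilon$.
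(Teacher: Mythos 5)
Your proposal is correct and follows precisely the route the paper itself sketches: use the concentration inequality (Proposition~\ref{prop:concentration_inequality}) together with sprinkling to transfer the crossing estimate from $f$ to $f^\varepsilon$ on a \emph{fixed} rectangle, then recover the $e^{-c\lambda}$ decay by re-running the renormalization/chaining argument of \cite{Threshold} for $f^\varepsilon$ (which inherits FKG from Assumption~\ref{assumption:a3} and correlation decay from Assumption~\ref{assumption:a4}). You also correctly diagnose why the naive one-shot union bound over $O(\lambda^2)$ unit balls cannot give exponential decay in $\lambda$ for $\varepsilon$ independent of $\lambda$, which is exactly the reason the paper works on a fixed box first; the only place your write-up is thinner than a full proof is the final ``chaining exactly as in \cite{Threshold}'' step, but the paper leaves that step at the same level of detail.
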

The proof of this proposition is similar to the one in \cite{Threshold}. We need to use the concentration inequality of Proposition \ref{prop:concentration_inequality} and the fact that the event $\{f^\varepsilon \in \text{Cross}_\ell(\mathcal{R})\}$ has probability close to the probability of $\{f\in \text{Cross}_\ell(\mathcal{R})\}$ if $\mathcal{R}$ is fixed and $\varepsilon$ is close to $0$. However, for the sake of clarity in the paper we do not write the proof here. A corollary of this theorem that will be useful later on is stated bellow.
\begin{proposition}
    \label{prop:long_crossing_event_prob}
    Assume $q$ satisfies Assumptions \ref{assumption:a1}, \ref{assumption:a2} for some $m \geq 3$, \ref{assumption:a3} and \ref{assumption:a4} for some $\beta > 2$. Suppose $\ell>0$ is given as well that some functions $L : \mathbb{R}_+\to \mathbb{R}_+$ and $l : \mathbb{R}_+\to \mathbb{R}_+$ that satisfy:
    \begin{itemize}
        \item $\exists C>0,\  \forall x \geq C,\ 1\leq l(x)\leq L(x),$
        \item $l(x)\xrightarrow[x\to\infty]{}\infty$.
    \end{itemize}
    Then, there exist constants $c>0, \varepsilon_0 >0$ such that,
    \begin{equation*}
        \forall 0<\varepsilon < \varepsilon_0,\  \forall x\geq C,\  \Proba{f^\varepsilon \in \emph{\text{Cross}}_\ell(L(x),l(x))}\geq 1 - \frac{4L(x)}{l(x)}e^{-cl(x)}.
    \end{equation*}
\end{proposition}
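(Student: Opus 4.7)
The plan is to tile the thin rectangle $[0,L(x)] \times [0,l(x)]$ by overlapping sub-rectangles on scale $l(x)$, apply Proposition~\ref{prop:prob_crossin_event_epsilon} with $\lambda = l(x)$ on each piece, and glue the resulting local crossings together by a classical planar topology argument. Setting $N := \lceil L(x)/l(x) \rceil$, I would introduce for $j = 0, \ldots, N-1$ the $2l(x) \times l(x)$ rectangles $A_j := [j l(x), (j+2) l(x)] \times [0, l(x)]$ (with the last rectangle shifted leftwards so that $A_{N-1} \subset [0,L(x)] \times [0,l(x)]$, which only enlarges the relevant overlap), and for $j = 0, \ldots, N-2$ the squares $B_j := [(j+1) l(x), (j+2) l(x)] \times [0, l(x)]$ contained in $A_j \cap A_{j+1}$. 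Applying Proposition~\ref{prop:prob_crossin_event_epsilon} with $\lambda = l(x)$ and $(a,b) = (2,1)$ to each $A_j$, and with $(a,b) = (1,1)$ to each $B_j$ (using isotropy of $f$ and hence of $f^\varepsilon$ to swap the two axes and obtain a vertical crossing event), I would obtain constants $c > 0$ and $\varepsilon_0 > 0$ such that, uniformly in $0 < \varepsilon < \varepsilon_0$, each event $\mathcal{H}_j := \{f^\varepsilon \text{ crosses } A_j \text{ horizontally}\}$ and $\mathcal{V}_j := \{f^\varepsilon \text{ crosses } B_j \text{ vertically}\}$ holds with failure probability at most $e^{-c l(x)}$.

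On the intersection $\bigcap_{j=0}^{N-1} \mathcal{H}_j \cap \bigcap_{j=0}^{N-2} \mathcal{V}_j$, I would deduce deterministically that $f^\varepsilon \in \text{Cross}_\ell(L(x),l(x))$. Let $H_j \subset \excur(f^\varepsilon) \cap A_j$ realize the horizontal crossing of $A_j$ and $V_j \subset \excur(f^\varepsilon) \cap B_j$ the vertical crossing of $B_j$. Since $f^\varepsilon$ is piecewise constant on $\varepsilon$-squares, $\excur(f^\varepsilon)$ is a locally finite union of closed squares and both $H_j$ and $V_j$ are path-connected; taking the sub-path of $H_j$ after its last visit to the vertical line $\{x = (j+1) l(x)\}$ extracts a horizontal crossing of $B_j$ sitting inside $H_j$. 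A horizontal and a vertical crossing of the same rectangle must meet, by the standard planar box-crossing topological fact, so $H_j \cap V_j \neq \emptyset$, and symmetrically $V_j \cap H_{j+1} \neq \emptyset$. Concatenating, $\bigcup_j H_j \cup \bigcup_j V_j$ is a connected subset of $\excur(f^\varepsilon) \cap ([0,L(x)] \times [0,l(x)])$ meeting both $\{0\} \times [0,l(x)]$ and $\{L(x)\} \times [0,l(x)]$, which is the desired crossing.

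A union bound over the $2N - 1$ events then yields
\begin{equation*}
    \Prob\left[f^\varepsilon \notin \text{Cross}_\ell(L(x), l(x))\right] \leq (2N-1) e^{-c l(x)} \leq \frac{4 L(x)}{l(x)} e^{-c l(x)},
\end{equation*}
where we used $N \leq L(x)/l(x) + 1 \leq 2 L(x)/l(x)$, valid because $l(x) \leq L(x)$ for $x \geq C$. The main obstacle, though a minor one, is purely geometric bookkeeping: one must place the shifted last rectangle $A_{N-1}$ (and the corresponding $B_{N-2}$) so that both lie inside $[0,L(x)] \times [0,l(x)]$ while the overlap structure used in the topological gluing is preserved. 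No new probabilistic estimate beyond Proposition~\ref{prop:prob_crossin_event_epsilon} is required, so the continuous/Gaussian difficulty is entirely absorbed into that result.
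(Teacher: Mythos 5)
Your proof is correct and follows essentially the same route as the paper: tile the long thin rectangle by overlapping $2l(x)\times l(x)$ rectangles, glue the horizontal crossings via vertical crossings of the overlaps, bound each failure probability by Proposition~\ref{prop:prob_crossin_event_epsilon} applied with $\lambda = l(x)$, and conclude with a union bound over at most $4L(x)/l(x)$ events. If anything, your decomposition is cleaner than the one written in the paper (where the displayed $H_i$ appear as $l(x)\times l(x)$ squares rather than the intended $2l(x)\times l(x)$ rectangles, and the $V_j$ protrude above the thin strip); the only micro-difference is that you invoke Proposition~\ref{prop:prob_crossin_event_epsilon} twice with $(a,b)=(2,1)$ and $(a,b)=(1,1)$ and should therefore take the minimum of the two resulting $\varepsilon_0$'s and $c$'s, whereas the paper uses a single application with $(a,b)=(2,1)$ for both families.
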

\begin{proof}
    The idea of the proof is basically to assemble a certain number of rectangles of dimension $2l(x)\times l(x)$ to observe the apparition of a crossing of dimension $L(x)\times l(x)$. Since $q$ is isotropic, the probability of crossing an horizontal rectangle is the same as crossing the rotated rectangle vertically. Fix $x\geq C$. For $i\in \{0,1,2,\dots, \lceil L(x)/l(x)\rceil-1\}$ we denote the rectangles,
    \begin{equation*}
            H_i := [l(x)i, l(x)(i+1)]\times [0,l(x)].
    \end{equation*}
    And for $j\in \{1,2,\dots, \lceil L(x)/l(x)\rceil-2\}$ we consider the rectangles,
    \begin{equation*}
            V_j := [l(x)j,l(x)(j+1)]\times [0, 2l(x)].
    \end{equation*}
    Note that if $\lceil L(x)/l(x)\rceil\leq 2$ then we do not consider any vertical rectangle $V_j$. Consider $\mathcal{H}_i$ the event that $H_i$ is crossed horizontally by $\mathcal{E}_\ell(f^\varepsilon)$, and $\mathcal{V}_i$ the event that $V_i$ is crossed vertically by $\mathcal{E}_\ell(f^\varepsilon)$ (see Figure \ref{fig:fig2_2D}).
    \begin{figure}
        \centering
        \includegraphics[width=14cm]{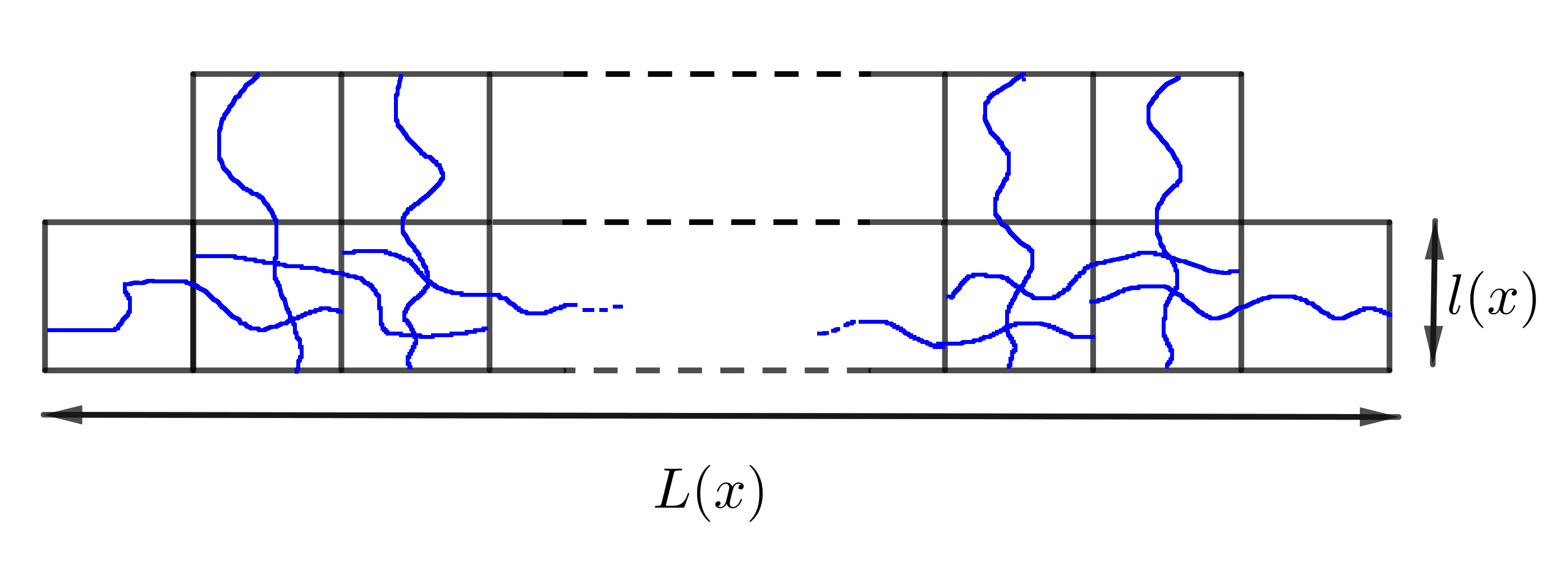}
        \caption{Illustration of the proof of Proposition \ref{prop:long_crossing_event_prob}.}
        \label{fig:fig2_2D}
    \end{figure}
    If we choose $a=2$ and $b=1$ in Proposition \ref{prop:prob_crossin_event_epsilon}, we get $\varepsilon_0>0,R_0,c>0$ such that if $\lambda \geq 1$ and $0<\varepsilon<\varepsilon_0$ then
    \begin{equation*}
            \Proba{f^\varepsilon\in \text{Cross}_\ell(2\lambda, \lambda)}\geq 1 - e^{-c\lambda}.
    \end{equation*}
    By isotropy, stationarity and the fact that $l(x)\geq 1$ we have for all $i,j$
    \begin{equation*}
            \Proba{\mathcal{H}_i}= \Proba{\mathcal{V}_j}=\Proba{f^\varepsilon \in \text{Cross}_\ell(2l(x),l(x))}\geq 1 - e^{-cl(x)}.
    \end{equation*}
    Now observe that the occurrence of all the events $\mathcal{H}_i$ and $\mathcal{V}_j$ implies that the rectangle $[0,L(x)]\times [0,l(x)]$ is crossed horizontally in $\mathcal{E}_\ell(f^\varepsilon)$ (see Figure \ref{fig:fig2_2D}). It only remains to apply a union bound, and noticing that $2\lceil L(x)/l(x)\rceil \leq 4L(x)/l(x)$ to conclude the proof.
\end{proof}
\subsection{The global structure}
In this subsection, we properly define the \textit{global structure} and we show that it will exist with high probability. In this section we suppose that $x>3$ is fixed as well as a parameter $\delta>0$ (that is independent of everything else including $x$ and that can thought of as small). We assume that $\varepsilon\in ]0,1[$ is fixed (it will be later be fixed as a function of $x$). Also we consider that $\ell>0$ is given, and we define the so-called \textit{harder level}
\begin{equation*}
    \ell'=\ell/2.
\end{equation*}
As explained in the introduction, we will work at the harder level $\ell'$ with $f^\varepsilon$ to recover information of $f$ at level $\ell$.
Now we make the following definition.
\begin{definition}
\label{def:thin_rectangle}
For $x>3$ and $\delta>0$, we define the quantities
\begin{itemize}
\item $l(x) = \log^{1+\delta}(x),$
\item $L(x) = x+l(x).$
\end{itemize}
We also define the \textit{thin rectangle} $H$,
\begin{equation}
    \label{eq:thin_rectangle}
    H = \left[-\frac{l(x)}{2}, -\frac{l(x)}{2}+L(x)\right]\times\left[-\frac{l(x)}{2}, \frac{l(x)}{2}\right].
\end{equation}
\end{definition}
Now the global structure is represented in Figure \ref{fig:global_structure}. It is composed of a crossing of the thin rectangle and of two circuits in the thin rectangle, one around $(0,0)$ and one around $(x,0)$. The crossing of the thin rectangle must also be a connection between the two circuits.
\begin{figure}
\centering
\includegraphics[width=15cm]{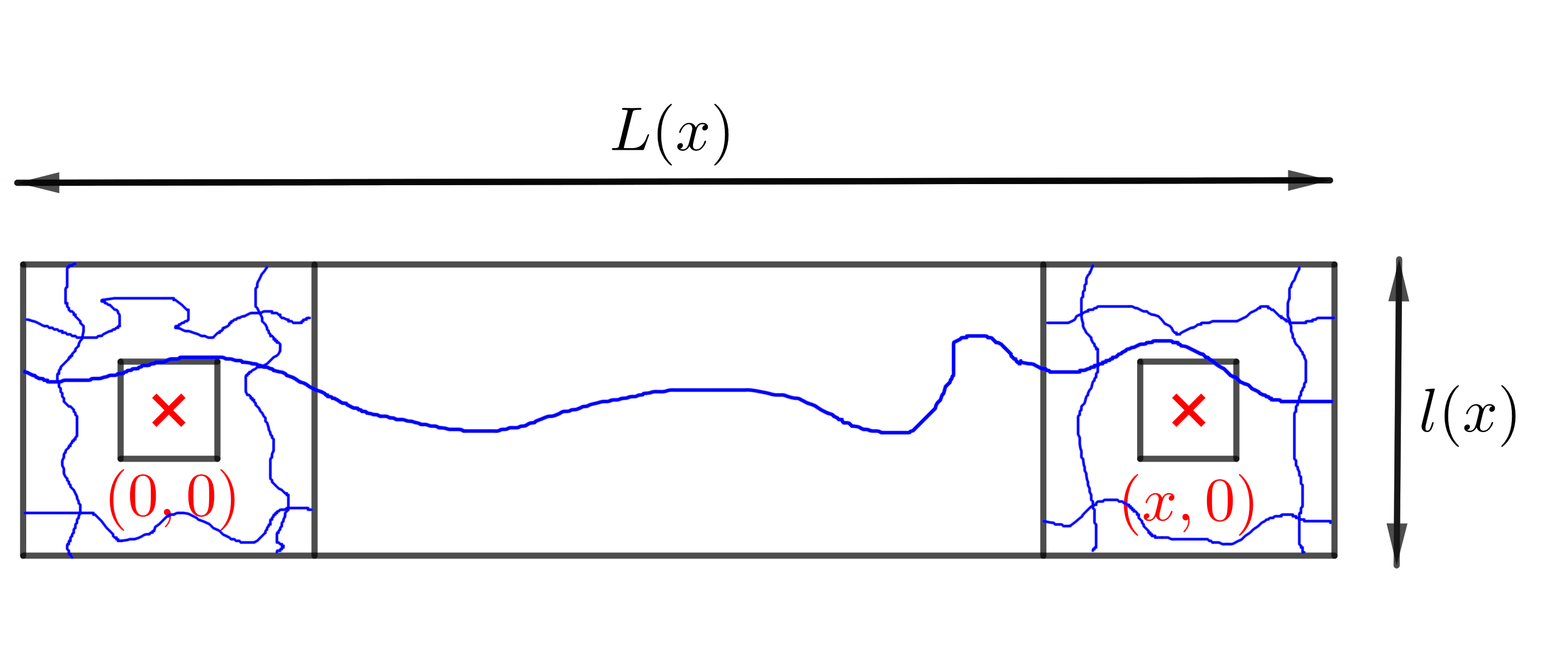}
\caption{Representation of the global structure.}
\label{fig:global_structure}
\end{figure}
To be rigorous we properly define an event that implies the existence of this global structure. Consider the following rectangles
\begin{itemize}
\item $H_1 = [-l(x)/2, -l(x)/6]\times [-l(x)/2,l(x)/2],$
\item $H_2 = [l(x)/6, l(x)/2]\times [-l(x)/2,l(x)/2],$
\item $H_3 = [-l(x)/2, l(x)/2]\times [-l(x)/2,-l(x)/6],$
\item $H_4 = [-l(x)/2, l(x)/2]\times [l(x)/6,l(x)/2].$
\end{itemize}
And for $i\in \{1,2,3,4\}$ we also consider their translated by $(x,0)$, $H'_i = (x,0)+H_i$. We denote $\mathcal{H}_i$ (resp $\mathcal{H}'_i$) the event that $H_i$ (resp $H'_i$) is crossed by $\mathcal{E}_{\ell'}(f^\varepsilon)$ lengthwise. By stationarity and isotropy we have for all $i\in \{1,2,3,4\}$
\begin{equation*}
    \Proba{\mathcal{H}_i} =\Proba{\mathcal{H}'_i} = \Proba{f^\varepsilon \in \text{Cross}_{\ell'}(l(x),l(x)/3)}.
\end{equation*}
We also consider the big thin rectangle $H = [-l(x)/2,x+l(x)/2]\times [-l(x)/2,l(x)/2]$, and we denote $\mathcal{H}$ the event that this rectangle is crossed by $\mathcal{E}_{\ell'}(f^\varepsilon)$ lengthwise. Finally, the event we consider is
\begin{equation}
    \label{eq:event_gx}
  \mathcal{G}^{(1)}_{x,\varepsilon} := \mathcal{H}\cap \bigcap_{i\in \{1,2,3,4\}}(\mathcal{H}_i \cap \mathcal{H}'_i).  
\end{equation}

This event $\mathcal{G}^{(1)}_{x,\varepsilon}$ is important for our purpose in the sense that it occurrence implies that if there is a path from $(0,0)$ to $(x,0)$ in $\mathcal{E}_{\ell}(f)$ then we are close to be able to find a similar path that stays in the thin rectangle $H$. This is made precise in the following lemma.
\begin{lemma}
	\label{lemma:global_struct1}
    With the above notations, on the event $\mathcal{G}^{(1)}_{x,\varepsilon}$, if there exists a path joining $(0,0)$ and $(x,0)$ in $\mathcal{E}_\ell(f)$, then we can find a path $\gamma$ joining $(0,0)$ and $(x,0)$ that is contained in $H$, and that can be written as $\gamma=\gamma_1\cup\gamma_2\cup\gamma_3$ with the further conditions:
    \begin{itemize}
    \item $\gamma_1$ is a path contained in $\excur(f)\cap [-l(x)/2,l(x)/2]\times [-l(x)/2,l(x)/2].$
    \item $\gamma_3$ is a path contained in $\excur(f)\cap [x-l(x)/2,x+l(x)/2]\times [-l(x)/2,l(x)/2].$
    \item $\gamma_2$ is a path contained in $\mathcal{E}_{\ell'}(f^\varepsilon)\cap H$.
\end{itemize}
\end{lemma}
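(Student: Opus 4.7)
The plan is to use the given path $\gamma_0 \subset \mathcal{E}_\ell(f)$ from $(0,0)$ to $(x,0)$ only near its two endpoints, and to replace its middle section by a controlled detour assembled from the crossings supplied by $\mathcal{G}^{(1)}_{x,\varepsilon}$. Write $B_0 := [-l(x)/2, l(x)/2]^2$ and $B_x := (x,0) + B_0$; note $B_0 \cup B_x \subset H$.

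First I would build, inside $B_0$, a closed loop $C_0 \subset \mathcal{E}_{\ell'}(f^\varepsilon)$ that encloses the origin. Each event $\mathcal{H}_i$ provides a lengthwise crossing $\gamma^{(i)} \subset \mathcal{E}_{\ell'}(f^\varepsilon) \cap H_i$, so $\gamma^{(1)}, \gamma^{(2)}$ are top-to-bottom and $\gamma^{(3)}, \gamma^{(4)}$ are left-to-right. At each of the four corner squares, for instance $H_1 \cap H_3 = [-l(x)/2, -l(x)/6]^2$, the vertical path $\gamma^{(1)}$ enters from the top and exits from the bottom while $\gamma^{(3)}$ enters from the right and exits from the left; by the classical topological fact that a horizontal and a vertical crossing of a square must meet, the two intersect. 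The four pairwise corner intersections glue $\gamma^{(1)} \cup \gamma^{(2)} \cup \gamma^{(3)} \cup \gamma^{(4)}$ into a connected subset of $\mathcal{E}_{\ell'}(f^\varepsilon) \cap B_0$ from which one can extract a simple closed sub-curve $C_0$ encircling the origin. The analogous construction from the $\mathcal{H}'_i$ yields $C_x \subset \mathcal{E}_{\ell'}(f^\varepsilon) \cap B_x$ around $(x,0)$.

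Next I would argue the lengthwise crossing $P$ of $H$ produced by $\mathcal{H}$ meets both circuits. Since $P$ crosses $H$ horizontally, its intersection with the thin vertical strip $H_2$ is a left-to-right crossing of $H_2$, hence must meet the vertical crossing $\gamma^{(2)} \subset C_0$ by the same topological fact; symmetrically, $P$ meets $C_x$ through $H'_1$. On the other hand, $C_0$ is a Jordan curve with $(0,0)$ in its interior and $(x,0)$ in its exterior (which holds since $l(x)/2 < x$ for $x > 3$ and $\delta$ small enough), so the path $\gamma_0$ must cross $C_0$; letting $b_1$ be the first such crossing, the initial sub-arc $\gamma_1$ of $\gamma_0$ from $(0,0)$ to $b_1$ lies in $\mathcal{E}_\ell(f) \cap B_0$. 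Symmetrically, letting $b_2$ be the last crossing of $\gamma_0$ with $C_x$, the terminal sub-arc $\gamma_3$ of $\gamma_0$ from $b_2$ to $(x,0)$ lies in $\mathcal{E}_\ell(f) \cap B_x$.

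Finally, to connect $b_1$ to $b_2$ inside $\mathcal{E}_{\ell'}(f^\varepsilon) \cap H$, pick any $a_1 \in P \cap C_0$ and $a_2 \in P \cap C_x$ and define $\gamma_2$ as the concatenation of a sub-arc of $C_0$ from $b_1$ to $a_1$, the sub-arc of $P$ from $a_1$ to $a_2$, and a sub-arc of $C_x$ from $a_2$ to $b_2$. All three pieces lie in $\mathcal{E}_{\ell'}(f^\varepsilon) \cap H$ (using $B_0, B_x \subset H$ and $P \subset H$), so $\gamma := \gamma_1 \cup \gamma_2 \cup \gamma_3$ has all the required properties. The only genuinely delicate step is the topological one --- extracting an honest Jordan curve from the union of the four crossings --- but the standard trick of passing to simple sub-arcs within each $\gamma^{(i)}$ before invoking the rectangle crossing intersection lemma handles it cleanly; the rest is bookkeeping of inclusions.
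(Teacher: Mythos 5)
The paper's own ``proof'' of this lemma is a single sentence declaring it trivial once one sees that a path from $(0,0)$ to $(x,0)$ must cross both circuits, so your write-up is providing exactly the details the paper omits, and your approach is the natural elaboration of the paper's sketch: assemble circuits $C_0, C_x$ from the four corner crossings, observe that $\gamma_0$ must cross each (Jordan curve theorem), observe that the long crossing $P$ of $H$ must also meet each (rectangle crossing lemma), and reroute.

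There is one genuine slip, though it is easy to repair. After you extract a \emph{simple} closed sub-curve $C_0$ from $\Gamma_0 := \gamma^{(1)}\cup\gamma^{(2)}\cup\gamma^{(3)}\cup\gamma^{(4)}$, you assert that $P$ ``must meet the vertical crossing $\gamma^{(2)}\subset C_0$.'' But the loop-erasure that produces $C_0$ keeps only a sub-arc of each $\gamma^{(i)}$, so $\gamma^{(2)}\subset C_0$ is not warranted, and the point of $P\cap\gamma^{(2)}$ you exhibit might not lie on $C_0$ at all; as written, $P\cap C_0$ could a priori be empty. The fix is immediate and does not require $P$ to meet $C_0$: you only need $P$ to meet the connected set $\Gamma_0$. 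Since the four corner intersections chain $\gamma^{(1)},\dots,\gamma^{(4)}$ together, $\Gamma_0$ is path-connected and contained in $\mathcal{E}_{\ell'}(f^\varepsilon)\cap B_0\subset\mathcal{E}_{\ell'}(f^\varepsilon)\cap H$, so you may route $b_1\in C_0\subset\Gamma_0$ to any $a_1\in P\cap\Gamma_0$ within $\Gamma_0$. The Jordan curve $C_0$ is needed only for the separation argument forcing $\gamma_0$ to hit $\Gamma_0$ (and similarly $C_x$ for $\Gamma_x$); once $b_1$ and $b_2$ are produced, everything else should be connected through $\Gamma_0$, $P$ and $\Gamma_x$ rather than through $C_0$ and $C_x$. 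A very minor remark: the parenthetical ``for $\delta$ small enough'' is unnecessary --- $l(x)/2<x$ holds for all $x>3$ and all $\delta\in(0,1)$, which is the range allowed by Theorem \ref{thm:principal}.
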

\begin{proof}
The proof is completely trivial once it is understood that a path from $(0,0)$ to $(x,0)$ must intersect both circuits of the global structure. We do not further detail the proof.
\end{proof}
We also argue that the event $\mathcal{G}^{(1)}_{x,\varepsilon}$ has high probability.
\begin{lemma}
\label{lemma:global_structure_high_prob}
Assume $q$ satisfies Assumptions \ref{assumption:a1}, \ref{assumption:a2} for some $m\geq 3$, \ref{assumption:a3} and \ref{assumption:a4} for some $\beta>0$. Suppose $\ell>0$ is given and $\ell' =\frac{\ell}{2}$, suppose also that $\delta>0$ is given. Then there exist constant $\varepsilon_0, c, C>0$ such that
\begin{equation*}
    \forall 0<\varepsilon < \varepsilon_0, \forall x\geq 3, \Proba{\mathcal{G}^{(1)}_{x,\varepsilon}}\geq 1-Ce^{-cl(x)},
\end{equation*}
where we recall that $l(x)=\log(x)^{1+\delta}$ was defined in Definition \ref{def:thin_rectangle}.
\end{lemma}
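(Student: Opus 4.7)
The proof plan is a direct union bound over the nine crossing events that compose $\mathcal{G}^{(1)}_{x,\varepsilon}$, handling the eight small square-like rectangles with Proposition \ref{prop:prob_crossin_event_epsilon} and the one long thin rectangle with Proposition \ref{prop:long_crossing_event_prob}.

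First I would deal with the eight events $\mathcal{H}_i$ and $\mathcal{H}'_i$. Each of the rectangles $H_i$ (and their translates $H'_i$) is, up to a rotation by $\pi/2$ and a translation, of the form $[0, l(x)] \times [0, l(x)/3]$. By isotropy and stationarity of $f$ (hence of $f^\varepsilon$), all eight events have probability equal to $\Proba{f^\varepsilon \in \text{Cross}_{\ell'}(l(x), l(x)/3)}$. I apply Proposition \ref{prop:prob_crossin_event_epsilon} with the fixed aspect ratio $a = 1$, $b = 1/3$ (at the level $\ell' > 0$), and scaling parameter $\lambda = l(x)$. Since $l(x) = \log^{1+\delta}(x) \geq 1$ for $x \geq 3$ large enough, the proposition gives $\varepsilon_0 > 0$ and $c_1 > 0$ such that for every $\varepsilon < \varepsilon_0$,
\begin{equation*}
  \Proba{\mathcal{H}_i} = \Proba{\mathcal{H}'_i} \geq 1 - e^{-c_1 l(x)}.
\end{equation*}

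Next I would handle the event $\mathcal{H}$ for the long thin rectangle $H$, whose dimensions are $L(x) \times l(x)$ with $L(x) = x + l(x)$. The two functions $L, l$ satisfy the hypotheses of Proposition \ref{prop:long_crossing_event_prob}: $l(x) \to \infty$ and $1 \leq l(x) \leq L(x)$ for $x$ large enough. Applying it at the level $\ell' > 0$ yields constants $\varepsilon_0' > 0$ and $c_2 > 0$ such that for $\varepsilon < \varepsilon_0'$,
\begin{equation*}
  \Proba{\mathcal{H}} \geq 1 - \frac{4L(x)}{l(x)} e^{-c_2 l(x)}.
\end{equation*}
The key quantitative point is then that the prefactor $4L(x)/l(x)$ is absorbed by half of the exponential: since $\log(L(x)/l(x)) \leq \log(x+l(x)) = O(\log x)$, while $l(x) = \log^{1+\delta}(x)$ grows strictly faster than $\log x$, for any $0 < c_3 < c_2$ and all $x$ large enough one has $\frac{4L(x)}{l(x)} e^{-c_2 l(x)} \leq e^{-c_3 l(x)}$.

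Finally, I take $\varepsilon_0$ to be the minimum of the two $\varepsilon_0$'s coming from the previous two steps, and the union bound over the nine events gives
\begin{equation*}
  \Proba{\mathcal{G}^{(1)}_{x,\varepsilon}} \geq 1 - 8 e^{-c_1 l(x)} - e^{-c_3 l(x)} \geq 1 - C e^{-c l(x)},
\end{equation*}
with $c = \min(c_1, c_3)$ and $C = 9$, valid for all $\varepsilon < \varepsilon_0$ and all $x \geq 3$ (adjusting $C$ to absorb the finitely many small values of $x$ for which the asymptotic comparison was used). There is no real obstacle here; the only non-cosmetic step is the comparison showing that the polynomial factor $L(x)/l(x)$ is killed by the exponential $e^{-c_2 l(x)}$, which is precisely where the choice $l(x) = \log^{1+\delta}(x)$ with $\delta > 0$ (rather than just $\delta = 0$) pays off.
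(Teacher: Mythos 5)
Your proof is correct and follows essentially the same route as the paper: a union bound over the nine crossing events, Proposition \ref{prop:prob_crossin_event_epsilon} with $a=1$, $b=1/3$ for the eight squarish rectangles, Proposition \ref{prop:long_crossing_event_prob} for the long rectangle, and absorption of the polynomial prefactor $4L(x)/l(x)$ into the exponential using $l(x)=\log^{1+\delta}(x)$. The only (harmless) cosmetic difference is that you make the absorption inequality and the treatment of finitely many small $x$ slightly more explicit than the paper does.
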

\begin{proof}
Recall the definition of $\mathcal{G}^{(1)}_{x,\varepsilon}$ in \eqref{eq:event_gx}.
We apply a union bound to estimate the probability of the complementary of this event:
\begin{align*}
    \Proba{(\mathcal{G}^{(1)}_{x,\varepsilon})^c}\leq \Proba{\mathcal{H}^c} + \sum_{i=1}^4 \Proba{(\mathcal{H}_i)^c}+\Proba{(\mathcal{H}'_i)^c}.
\end{align*}
Now, we can apply Proposition \ref{prop:long_crossing_event_prob} with $\ell'>0$, $l(x)=\log(x)^{1+\delta}$ and $L(x)=x+l(x)$, to get $\varepsilon'_0 > 0$ and $c'>0$ such that for all $0<\varepsilon <\varepsilon'_0$ and all $x\geq 3$ we have
\begin{equation*}
    \Proba{\mathcal{H}^c} \leq \frac{4L(x)}{l(x)}e^{-c'l(x)}.
\end{equation*}
We can also apply Proposition \ref{prop:prob_crossin_event_epsilon} with $\ell'>0$, $a=1$ and $b=1/3$, to get $\varepsilon''_0>0$ and $c''>0$ such that for all $0<\varepsilon <\varepsilon''_0$ and all $x\geq 3$ we have
\begin{equation*}
    \Proba{(\mathcal{H}_1)^c} \leq e^{-c''l(x)}.
\end{equation*}
Note that we used the fact that $l(x)\geq 1$ when $x\geq 3$. Moreover the previous property concerning $\mathcal{H}_1$ extend to all $\mathcal{H}_i$ and $\mathcal{H}'_i$ by stationarity and isotropy of the field. We now conclude by choosing $\varepsilon_0 = \min(\varepsilon'_0,\varepsilon''_0)$ and adjusting the constants $c,C$ (note that we use the fact that $\frac{4L(x)}{l(x)}e^{-c'l(x)} = O(e^{-cl(x)})$ for any $0<c<c'$).
\end{proof}

Now we present how we recover information about $f$ from information about $f^\varepsilon$. Consider the event
\begin{equation}
    \label{eq:event_gprimex}
    \mathcal{G}^{(2)}_{x,\varepsilon} := \{\norm{f-f^\varepsilon}{\infty, H}<\frac{\ell}{2}\}.
\end{equation}
 This event ensures that in the thin rectangle $H$, then $f$ is very close to $f^\varepsilon$ (this will have high probability thanks to Proposition \ref{prop:concentration_inequality}). Under this event we have the following:
\begin{lemma}
\label{lemma:global_struct2}
Assume that the event $\mathcal{G}^{(2)}_{x,\varepsilon}$ holds. There exists a universal constant $C>0$ such that, for any points $y_1,y_2\in H$, if these two points are connected within $H$ by a path in $\mathcal{E}_{\ell'}(f^\varepsilon)$, then they are also connected by a path in $\mathcal{E}_\ell(f)$ of length at most $\frac{C}{\varepsilon}\text{Area}(H)$.
\end{lemma}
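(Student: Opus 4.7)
The plan is to first reduce the problem to a discrete one by observing that, under $\mathcal{G}^{(2)}_{x,\varepsilon}$, the continuous sublevel set $\mathcal{E}_{\ell'}(f^\varepsilon) \cap H$ is already contained in $\excur(f)$, and then to rebuild the path so that it travels through centres of the discretising cells: its length is then controlled combinatorially by the number of $\varepsilon$-cells in $H$.

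First, for every $z \in H$ the event $\mathcal{G}^{(2)}_{x,\varepsilon}$ gives $f(z) > f^\varepsilon(z) - \ell/2$. If moreover $f^\varepsilon(z) \geq -\ell' = -\ell/2$, this yields $f(z) > -\ell$, so the inclusion $\mathcal{E}_{\ell'}(f^\varepsilon) \cap H \subseteq \excur(f)$ holds. Thus any continuous path inside $\mathcal{E}_{\ell'}(f^\varepsilon) \cap H$ joining $y_1$ and $y_2$ is automatically a path inside $\excur(f)$, and the only remaining task is to exhibit such a path whose Euclidean length is explicitly controlled.

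Write $S_x = x + [-\varepsilon/2, \varepsilon/2)^2$ for each $x \in \varepsilon \mathbb{Z}^2$; by Definition \ref{def:fepsilon} the set $\mathcal{E}_{\ell'}(f^\varepsilon)$ is exactly the disjoint union of those cells $S_x$ for which $f(x) \geq -\ell'$. Let $\mathcal{S}$ be the set of centres $x$ of such cells meeting $H$, so that $|\mathcal{S}| \leq C_1 \text{Area}(H)/\varepsilon^2$ for some universal constant $C_1$. The continuous path provided by the hypothesis witnesses that the centres $v_0$ and $v_k$ of the cells containing $y_1$ and $y_2$ respectively lie in the same connected component of $\mathcal{S}$ when the latter is equipped with the $\ell^\infty$ nearest-neighbour (eight-connected) adjacency; extracting a simple graph path, I obtain a sequence $v_0, v_1, \dots, v_k$ of centres in $\mathcal{S}$ with $\norm{v_i - v_{i+1}}{\infty} = \varepsilon$ and with $k \leq |\mathcal{S}|$.

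The new path $\gamma$ is the concatenation of the straight segments $y_1 \to v_0 \to v_1 \to \dots \to v_k \to y_2$. Each segment has Euclidean length at most $\varepsilon\sqrt{2}$, so $\text{length}(\gamma) \leq (k+2)\varepsilon\sqrt{2} \leq C\,\text{Area}(H)/\varepsilon$ for a universal $C$. The delicate point is to check that $\gamma$ actually remains inside $\mathcal{E}_{\ell'}(f^\varepsilon)$: each segment between two consecutive centres lies in the union of the corresponding two closed cells, but at a diagonal step it crosses the shared corner, which under the half-open convention of Definition \ref{def:fepsilon} belongs to exactly one of the four surrounding cells. A short case analysis according to the direction of the diagonal shows that, when the relevant corner cell is bad, connectivity of the continuous set between $v_i$ and $v_{i+1}$ already forces one of the two axis-aligned intermediate neighbours to lie in $\mathcal{S}$; the problematic diagonal can then be replaced by two four-connected steps routed through that neighbour, at most doubling $k$. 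This corner analysis is the step I expect to be the main technical obstacle; once it is settled, the inclusion from the first step concludes the proof.
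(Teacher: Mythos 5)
Your proposal is essentially the same construction as the paper's: discretise $H$ into $\varepsilon$-cells, call a cell open if $f^\varepsilon \geq -\ell'$ on it, extract a discrete path of at most $O(\text{Area}(H)/\varepsilon^2)$ open cells from the continuous path, and concatenate straight segments between cell centres to get length $O(\text{Area}(H)/\varepsilon)$. The idea, the count, and the bound all match.

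The one place you diverge is in the order of operations, and it is what creates your ``corner'' worry. You first prove the inclusion $\mathcal{E}_{\ell'}(f^\varepsilon)\cap H \subset \mathcal{E}_\ell(f)$ and then try to build a path that remains inside the half-open set $\mathcal{E}_{\ell'}(f^\varepsilon)$ before appealing to this inclusion. Since $\mathcal{E}_{\ell'}(f^\varepsilon)$ is a disjoint union of half-open squares, a segment between two diagonally adjacent centres passes through the shared corner, which under the half-open convention may lie in a closed cell, so the segment need not stay inside $\mathcal{E}_{\ell'}(f^\varepsilon)$. The paper avoids the issue entirely by observing that on the event $\mathcal{G}^{(2)}_{x,\varepsilon}$ one has $f > -\ell$ on every open half-open cell, and hence by continuity of $f$ one has $f \geq -\ell$ on the \emph{closure} of every open cell; the straight segments then stay inside the union of these closures, which is a subset of $\mathcal{E}_\ell(f)$ directly. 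You should adopt this continuity step, because your proposed combinatorial fix, as written, is not quite sound: you extract the discrete sequence $v_0,\dots,v_k$ as an arbitrary simple path in the eight-connected adjacency graph, and then invoke ``connectivity of the continuous set between $v_i$ and $v_{i+1}$'' --- but a graph path chosen this way carries no memory of the original continuous path, so there is nothing forcing the needed intermediate cell to be open for a generic problematic diagonal step. That argument only works if you extract the discrete path directly from the continuous path (in which case one checks that the ``bad'' diagonal transitions cannot occur at all, since the union of two cells sharing only a bad corner is topologically disconnected). Either route is repairable, but the paper's use of closures of open cells in $\mathcal{E}_\ell(f)$ makes the entire concern evaporate and is the cleaner option.
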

\begin{proof}
Take any two points $y_1,y_2\in H$ connected within $H$ by a path in $\mathcal{E}_{\ell'}(f^\varepsilon)$. Because $f^\varepsilon$ is piecewise constant we can consider all squares of the form $z+ \left[-\frac{\varepsilon}{2}, \frac{\varepsilon}{2}\right[^2$ that intersect $H$ (where $z\in \varepsilon\Z^2$). Since $x\geq 3$, we see that $l(x)$ and $L(x)$ are bounded away from $0$ and there exists a universal constant $C>0$ such that there are at most $C\frac{Area(H)}{\varepsilon^2}$ such small squares intersecting $H$. Now we say that such a small square $z+\left[-\frac{\varepsilon}{2}, \frac{\varepsilon}{2}\right[^2$ is open if $f^\varepsilon(z)\geq -\ell'= -\frac{\ell}{2}$ and closed otherwise. Because there is a path in $\mathcal{E}_{\ell'}(f^\varepsilon)$ between $y_1$ and $y_2$, we can find a path of adjacent open squares joining the square containing $y_1$ and the square containing $y_2$ (two squares $z+\left[-\frac{\varepsilon}{2}, \frac{\varepsilon}{2}\right[^2$ $z'+\left[-\frac{\varepsilon}{2}, \frac{\varepsilon}{2}\right[^2$ are said to be adjacent if $\norm{z-z'}{\infty}=\varepsilon$, so that a square has at most eight adjacent squares).
On the event $\mathcal{G}^{(2)}_{x,\varepsilon}$, we see that on every open square we have
\begin{equation*}
    f\geq f^\varepsilon-\norm{f-f^\varepsilon}{\infty, H}\geq -\frac{\ell}{2}-\frac{\ell}{2}\geq -\ell.
\end{equation*}
Thus, we can now find a path in $\mathcal{E}_\ell(f)$ joining $y_1$ and $y_2$. This path will simply goes through some open squares in straight line. Moreover we can build such a path so that each small square will be crossed at most once. Hence, we found a path in $\mathcal{E}_\ell(f)$ joining $y_1$ and $y_2$ of length at most $\sqrt{2}\varepsilon \times C\frac{\text{Area}(H)}{\varepsilon^2}= \sqrt{2}C \frac{\text{Area}(H)}{\varepsilon}$.  
\end{proof}
We conclude this section, showing that the event $\mathcal{G}^{(2)}_{x,\varepsilon}$ has high probability.
\begin{lemma}
\label{lemma:event_gtilde_high_prob}
Assume that $q$ satisfies Assumptions \ref{assumption:a1}, \ref{assumption:a2} for some $m\geq 3$, \ref{assumption:a4} for some $\beta>1$. Assume that $\ell>0$ is fixed, then there exist constants $C,c>0$ such that for all $0<\varepsilon<\ell'=\frac{\ell}{2}$ and for all $x\geq 3$, we have
\begin{equation*}
    \Proba{\mathcal{G}^{(2)}_{x,\varepsilon}} \geq 1- Cx\log(x)^{1+\delta}\exp(-c\varepsilon^{-2}).
\end{equation*}
\end{lemma}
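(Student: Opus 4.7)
The plan is to bound $\Prob[(\mathcal{G}^{(2)}_{x,\varepsilon})^c]$ by a union bound over a cover of the thin rectangle $H$ by unit discs, reducing the estimate on each disc to Proposition \ref{prop:concentration_inequality}.

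First I would cover $H$ with closed discs of radius $1$ centered at points $y_i$ of a coarsened sublattice of $\varepsilon\Z^2$. Setting $\eta := \varepsilon \lceil 1/\varepsilon \rceil$ we have $\eta \in [1, 1+\varepsilon]$ and $\eta \Z^2 \subset \varepsilon \Z^2$, and taking as centers the points $y_i \in \eta \Z^2$ at distance at most $1$ from $H$, the mesh is small enough that the unit discs cover $H$. For $x \geq 3$ the number $N$ of such centers is bounded by a constant multiple of $\mathrm{Area}(H) = L(x)\, l(x) \leq 2x \log(x)^{1+\delta}$, so $N \leq C x \log(x)^{1+\delta}$.

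Second, for each center $y_i \in \varepsilon \Z^2$, the random field $g(z) := f(z + y_i)$ has the same law as $f$ by stationarity, and because $y_i$ lies on the discretization lattice the elementary identity $\pi(z+y_i) = \pi(z) + y_i$ (where $\pi$ is the nearest-point projection onto $\varepsilon \Z^2$) yields the pointwise equality $g^\varepsilon(z) = f^\varepsilon(z+y_i)$. Consequently the random variables $\sup_{z \in B(y_i, 1)} |f(z) - f^\varepsilon(z)|$ and $\sup_{\norm{w}{} \leq 1} |f(w) - f^\varepsilon(w)|$ have the same distribution. Since $\varepsilon < \ell/2$ by hypothesis, Proposition \ref{prop:concentration_inequality} applied with $s = \ell/2$ gives, for every $i$,
\[
\Proba{\sup_{z \in B(y_i, 1)} |f(z) - f^\varepsilon(z)| \geq \ell/2} \leq \exp\!\left(-c \tfrac{\ell^2}{4}\varepsilon^{-2}\right).
\]

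Finally a union bound over the $N$ discs gives
\[
\Proba{(\mathcal{G}^{(2)}_{x,\varepsilon})^c} \leq \sum_{i=1}^{N} \Proba{\sup_{z \in B(y_i, 1)} |f(z) - f^\varepsilon(z)| \geq \ell/2} \leq C x \log(x)^{1+\delta} \exp(-c' \varepsilon^{-2}),
\]
with $c' = c\ell^2/4$, which is the claimed estimate. The only subtle point is making the translation argument precise: Proposition \ref{prop:concentration_inequality} is stated for the unit ball centered at the origin, and $f^\varepsilon$ is not stationary, so one cannot a priori translate the estimate to other discs. Placing the disc centers on $\varepsilon\Z^2$ (rather than on $\Z^2$) restores the commutation between translation and discretization and lets the inequality be applied uniformly to every disc in the cover.
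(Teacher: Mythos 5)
Your proof takes essentially the same route as the paper's: cover $H$ by $O(\mathrm{Area}(H))$ unit balls, apply Proposition~\ref{prop:concentration_inequality} with $s=\ell/2$ to each, and conclude by a union bound. The paper's proof simply invokes ``stationarity of the field'' to transport the estimate of Proposition~\ref{prop:concentration_inequality} from the origin to each ball center, glossing over exactly the subtlety you flag: $f^\varepsilon$ is only $\varepsilon\Z^2$-stationary, not stationary, so the law of $\sup_{B(y_i,1)}|f-f^\varepsilon|$ matches that of $\sup_{B(0,1)}|f-f^\varepsilon|$ only when $y_i\in\varepsilon\Z^2$. Your remedy --- choosing the ball centers on a sublattice $\eta\Z^2\subset\varepsilon\Z^2$ so that translation commutes with the discretization $\pi$ --- is correct and is the right way to make the paper's one-line appeal to stationarity rigorous. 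The one slip is arithmetic: with $\eta=\varepsilon\lceil 1/\varepsilon\rceil\in[1,1+\varepsilon)$, unit discs centered on $\eta\Z^2$ cover the plane only when $\eta\le\sqrt 2$, which fails for $\varepsilon$ in the range $(\sqrt 2-1,\ell/2)$. Replace $\lceil\cdot\rceil$ by $\lfloor\cdot\rfloor$ (so $\eta\le 1$) and restrict to $\varepsilon\le 1/2$, say; for $\varepsilon\in[1/2,\ell/2)$ the quantity $\exp(-c\varepsilon^{-2})$ is bounded below, so the claimed bound holds trivially after enlarging $C$. With that fix your argument is complete, and it is in fact a more careful version of the argument the paper sketches.
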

\begin{proof}First,if $x\geq 3$ we can cover the thin rectangle $H$ with at most $C\text{Area}(H)$ balls of radius $1$, where $C>0$ is a universal constant. Adjusting the constant $C$ if needed, we see that we can cover $H$ with $Cx\log(x)^{1+\delta}$ balls of radius $1$. By stationarity of the field an applying a union bound we see that
\begin{equation*}
\Proba{(\mathcal{G}^{(2)}_{x,\varepsilon})^c} \leq Cx\log(x)^{1+\delta}\Proba{\sup_{\norm{y}{}\leq 1}|f^\varepsilon(y)-f(y)| \geq \frac{\ell}{2}}.
\end{equation*}
It only remains to apply Proposition \ref{prop:concentration_inequality} with $s=\frac{\ell}{2}$ to get the conclusion.
\end{proof}

\section{Local control of the chemical distance around a point}
\label{sec:local_struct}
In this section we present an argument to control the behavior of the chemical distance locally around a point. This argument relies on a Kac-Rice formula and on a deterministic geometric argument.
\subsection{A deterministic argument}
In the following, we take $\mathcal{E}\subset \R^2$ a $\mathcal{C}^1$ differentiable 2-dimensional submanifold of $\R^2$ with boundary. We also take a parameter $R\geq 1$ (it will soon depend on $x$), we consider a box $B$ of the form $B = [-R,R]^2$ and we further assume that $\mathcal{E}$ intersects $B$ but that $\partial \mathcal{E}$ can only intersect $\partial B$ transversely. Under those hypotheses we consider $\mathcal{C}$ one of the connected component of $B\cap \mathcal{E}$. Since $\mathcal{E}$ is smooth and the boundary of $B$ is piecewise $\mathcal{C}^1$, the following definition makes sense.
\begin{figure}
    \centering
    \includegraphics[width=10cm]{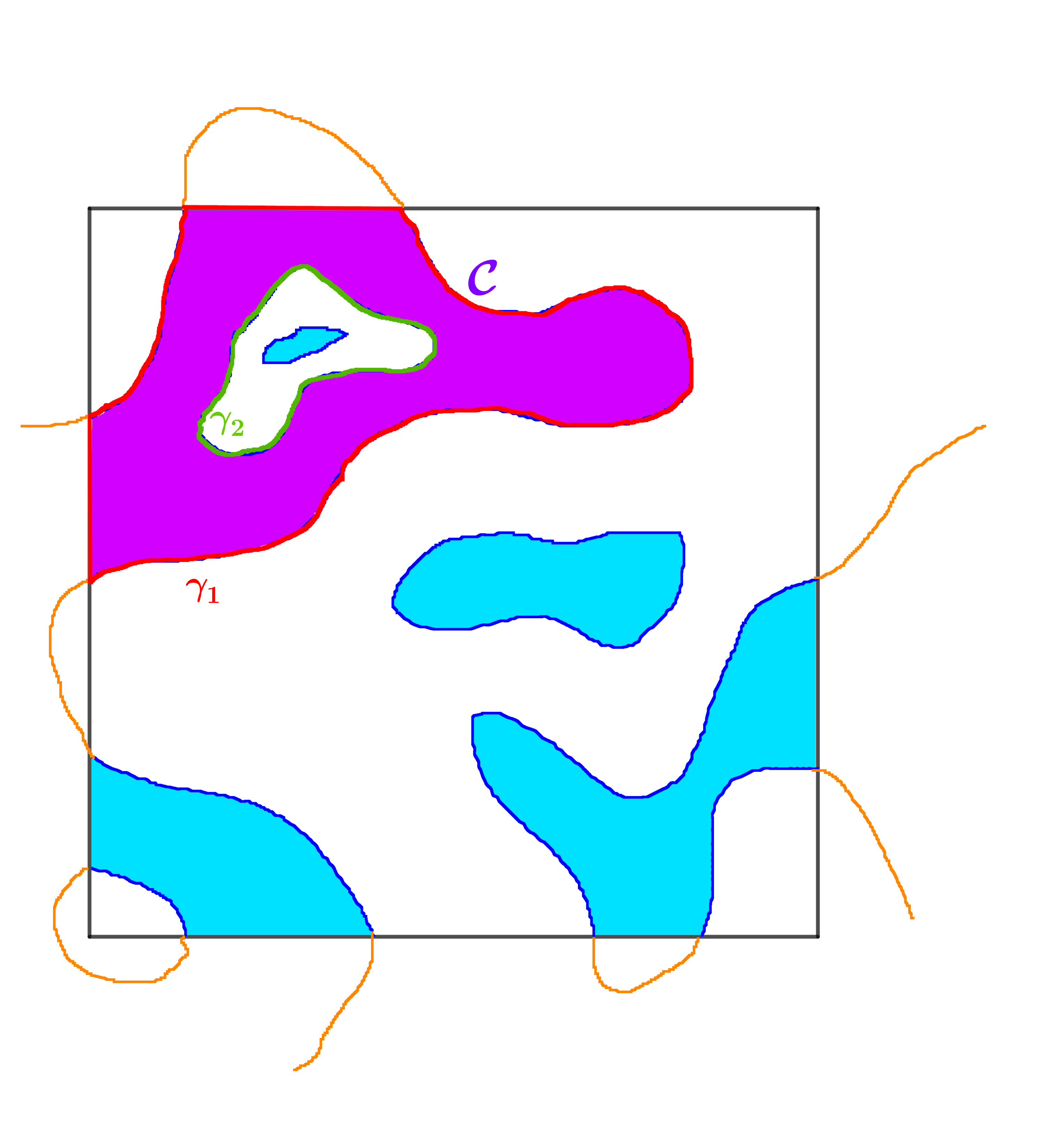}
    \caption{Illustration of $\mathcal{C}$ (in purple) and the boundary $\partial \mathcal{C} = \gamma_1 \sqcup \gamma_2$. The other clusters are in light blue.}
    \label{fig:my_label}
\end{figure}
\begin{definition}
If $x,y\in \mathcal{C}$ we define the chemical distance between $x$ and $y$ in $\mathcal{C}$ as,
\begin{equation*}
d_{\text{chem}}^{\mathcal{C}}(x,y) := \inf_{\gamma\in \Gamma(x,y,\mathcal{C})}\text{length}(\gamma),
\end{equation*}
where $\Gamma(x,y,\mathcal{C})$ is the set of all continuous rectifiable paths $\gamma$ from $x$ to $y$ contained in $\mathcal{C}$, and by $\text{length}(\gamma)$ we mean the Euclidean length of the path $\gamma$.
\end{definition}
We remark that if $\mathcal{C}$ is taken to be a connected component of $\mathcal{E}_\ell(f)$, then this chemical distance is the usual chemical distance as defined in the introduction (see Definition \ref{def:dchem_continous}).
\begin{definition}
\label{def:diam_chem}
We also define the chemical diameter of $\mathcal{C}$ as
\begin{equation*}
\text{diam}_{\text{chem}}(\mathcal{C}) := \sup_{x,y\in \mathcal{C}}d^\mathcal{C}_{\chem}(x,y).
\end{equation*}
\end{definition}
That is we look at two points in $\mathcal{C}$ that are as far away as possible for the distance $d_\chem^{\mathcal{C}}$.
\begin{remark}
 Note that since $\mathcal{E}$ is a smooth manifold with boundary and since $B$ is closed, then $\mathcal{C}$ is a closed and bounded set of $\R^2$ hence a compact set. In particular, one can easily show that the supremum in Definition \ref{def:diam_chem} is in fact a maximum (in general this maximum will not be reached by a unique pair of points $x,y$).
\end{remark}
We will be interested in the topological boundary of $\mathcal{C}$. It is known that the topological boundary of $\mathcal{E}$ is a union of topological circles (Jordan curves) and topological lines. When intersected with the ball $B = [-R,R]^2$, we see that the boundary of $\mathcal{C}$ is a union of disjoints Jordan curves that are piecewise $\mathcal{C}^1$ (there is no degeneracy since we assume that $\partial \mathcal{E}$ can only intersect $\partial B$ transversely). Note that some of those Jordan curve can possibly contains parts of the boundary of $B$, and that even if $\mathcal{E}$ is connected, this is not necessarily the case of $\mathcal{E}\cap B$ (see Figure \ref{fig:my_label}). We now write the boundary of $\mathcal{C}$ as
\begin{equation*}
    \partial\mathcal{C} = \gamma_1 \sqcup \gamma_2 \sqcup \dots \sqcup \gamma_n,
\end{equation*}
where the $\gamma_i$ are disjoint Jordan curves. This allows the following definition.
\begin{definition}
With the notations above, we define the length of $\partial \mathcal{C}$ as
\begin{equation*}
    \text{length}(\partial \mathcal{C}) := \sum_{i=1}^n \text{length}(\gamma_i),
\end{equation*}
where $\text{length}(\gamma_i)$ denotes the Euclidean length of the Jordan curve $\gamma_i$ (which is well defined since $\gamma_i$ is piecewise $\mathcal{C}^1$).
\end{definition}
Now our key lemma is the following.
\begin{lemma}
\label{lemma:grid}
With the notations above, we have
\begin{equation*}
\text{\emph{diam}}_\text{\emph{chem}}(\mathcal{C}) \leq 2 \text{\emph{length}}(\partial \mathcal{C}).
\end{equation*}
\end{lemma}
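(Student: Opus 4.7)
The plan is to construct, for any two points $x, y \in \mathcal{C}$, an explicit path in $\mathcal{C}$ from $x$ to $y$ whose Euclidean length is at most $2\,\text{length}(\partial \mathcal{C})$. Write $L := \text{length}(\partial \mathcal{C})$. Since $\mathcal{C}$ is connected, its boundary decomposes as $\partial \mathcal{C} = \gamma_1 \sqcup \cdots \sqcup \gamma_n$, where $\gamma_1$ is the unique outer Jordan curve (bounding the unbounded component of $\R^2 \setminus \mathcal{C}$) and $\gamma_2, \ldots, \gamma_n$ bound the holes of $\mathcal{C}$. The key geometric input is the classical fact that for any Jordan curve $\gamma$ and any line $\ell$, the orthogonal projection $\pi_\ell(\gamma)$ has length at most $\text{length}(\gamma)/2$: indeed, $\pi_\ell|_\gamma$ attains its extrema and is at least $2$-to-$1$ above the interior of its image, and $\pi_\ell$ is $1$-Lipschitz, so integrating along $\gamma$ gives $\text{length}(\gamma) \geq 2|\pi_\ell(\gamma)|$. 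In particular, since $\mathcal{C}$ is enclosed by $\gamma_1$, the vertical extent of $\mathcal{C}$ is bounded by $\text{length}(\gamma_1)/2 \leq L/2$.

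The path from $x$ to $y$ is assembled in three pieces. First, I would project $x$ onto $\gamma_1$ as follows: starting at $x$, move upward along the vertical line $V_x$, staying in $\mathcal{C}$; whenever $V_x$ would exit $\mathcal{C}$ into the hole bounded by some $\gamma_i$ ($i \geq 2$), detour along $\gamma_i$ in the shorter direction, from the point of entry to the highest intersection of $V_x$ with $\gamma_i$, then resume moving upward. Since the topmost intersection of $V_x$ with $\mathcal{C}$ must lie on $\gamma_1$ (above it one is in the unbounded component of $\R^2 \setminus \mathcal{C}$), this process terminates at some $x' \in \gamma_1$. The total vertical travel is at most the vertical extent of $\mathcal{C}$, hence $\leq \text{length}(\gamma_1)/2$, and the total detour length is at most $\sum_{i \geq 2} \text{length}(\gamma_i)/2 \leq (L - \text{length}(\gamma_1))/2$, since each visited hole is circumnavigated at most once and contributes at most half its perimeter. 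Thus this first piece has length at most $L/2$. The same construction applied to $y$ yields a path of length at most $L/2$ to some $y' \in \gamma_1$. Finally, connect $x'$ to $y'$ along the shorter arc of $\gamma_1$, of length at most $\text{length}(\gamma_1)/2 \leq L/2$. Concatenating yields a path from $x$ to $y$ in $\mathcal{C}$ of total length at most $3L/2 \leq 2L$, which is the desired bound (with a bit of room to spare, explaining why the statement tolerates the constant $2$).

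The main technical obstacle is making the vertical/detour iteration rigorous in the presence of non-generic configurations: for instance, when $V_x$ is tangent to some $\gamma_i$, or when $V_x$ meets $\gamma_i$ at a point where $\gamma_i$ fails to be $\mathcal{C}^1$ (e.g., at a junction with $\partial B$, since the $\gamma_i$ are only piecewise $\mathcal{C}^1$). These are resolved by a small perturbation of the ``vertical'' direction, using the transversality of $\partial \mathcal{E}$ with $\partial B$ assumed in the setup: for almost every direction, each $\gamma_i$ is crossed at finitely many points and transversely, at which point the iterative detour construction is straightforward to carry out and its length is controlled as above.
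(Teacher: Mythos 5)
Your proof takes essentially the same approach as the paper's: decompose $\partial\mathcal{C}$ into a single outer contour $\gamma_1$ plus interior holes, walk along a straight line from the given interior point toward $\gamma_1$ while detouring around each hole encountered (paying at most half its perimeter each, visiting each hole at most once), and control the straight-line travel via a projection/diameter bound for $\gamma_1$. The only cosmetic differences are that you project both endpoints vertically onto $\gamma_1$ and join them along the boundary, while the paper sends one endpoint along a segment to a fixed landmark $z'\in\gamma_1$ and then applies the triangle inequality.
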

\begin{remark}
We do not think that the factor $2$ in the statement of the lemma is optimal, however it will be enough for our purpose.
\end{remark}
The proof of Lemma \ref{lemma:grid} is long and technical and can be skipped for a first reading. We delay the proof of this lemma to the end of Section \ref{sec:proof_of_main_thm}.
\begin{figure}
\centering
\includegraphics[width=12cm]{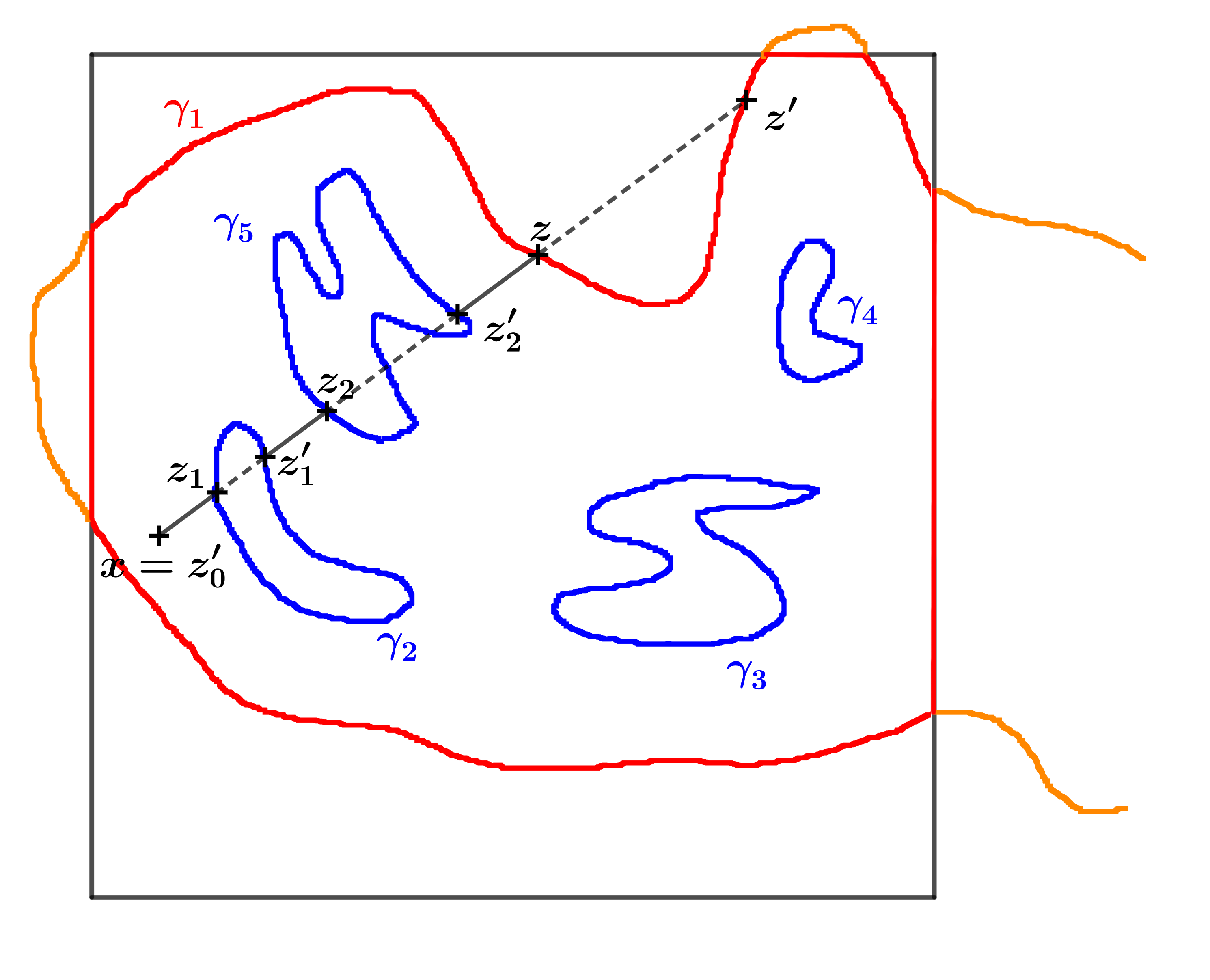}
\caption{Illustration of Lemma \ref{lemma:grid}.}
\label{fig:lemma_geometric}
\end{figure}

\subsection{Control of the chemical distance around a point}
Now we go back to our random field $f$. With the notation of the previous section, we want to take $\mathcal{E} := \mathcal{E}_\ell(f)$ (so that the set $\mathcal{E}$ is random). However to make this rigorous we need to state the following well-known lemma:
\begin{lemma}[Lemma A.9 in \cite{quasi_independance}]
\label{lemma:smooth}
Assume that $q$ satisfies Assumptions \ref{assumption:a1}, \ref{assumption:a2} for some $m\geq 3$ and \ref{assumption:a4} for some $\beta > 2$, then if $\ell\in \mathbb{R}$  and $R>0$ are fixed the following is true almost surely:
\begin{enumerate}
    \item The sets $\mathcal{E}_\ell(f) = \{x\in \R^2\ |\ f(x)\geq -\ell\}$ and $\mathcal{E}_{-\ell}(-f)=\{x \in \R^2\ |\ f(x)\leq -\ell\}$ are two 2-dimensional smooth submanifold of $\R^2$ with boundary. Moreover their boundary is the same and is precisely the set $\mathcal{Z}_\ell(f) := \{x\in \R^2\ |\ f(x)=-\ell\}.$
    \item The set $\mathcal{Z}_\ell(f)$ can only intersect $\partial B_R$ transversally.
\end{enumerate}
\end{lemma}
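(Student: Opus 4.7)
The plan is to reduce both claims to standard applications of Bulinskaya's lemma for smooth Gaussian fields. Under Assumption \ref{assumption:a2} with $m\geq 3$, $f$ admits a $\mathcal{C}^{m-1}\supseteq \mathcal{C}^{2}$ modification, so $\nabla f$ is a well-defined stationary centered Gaussian field and the random vector $(f(x),\nabla f(x))$ is Gaussian in $\R^{3}$ with a law independent of $x$. Its covariance matrix has a block form: the off-diagonal block $\mathrm{Cov}(f(x),\nabla f(x))=\nabla\kappa(0)=0$ vanishes since $\kappa$ is even, and $\mathrm{Cov}(\nabla f(x))=-\mathrm{Hess}\,\kappa(0)=\int_{\R^{2}}ss^{T}\mu(ds)$. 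This latter matrix is positive definite because, by Assumption \ref{assumption:a1}, the spectral measure $\mu$ charges a non-empty open set and in particular is not supported on any line through the origin. Together with $\kappa(0)>0$, the joint law of $(f,\nabla f)$ is non-degenerate and its density on $\R^{3}$ is everywhere bounded. This non-degeneracy is the point the rest of the argument relies on and is, in my view, the main technical check.

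For item 1, fix a compact box $K\supset \overline{B_{R}}$ and consider the $\mathcal{C}^{1}$ map
\[
F:K\to\R^{3},\quad F(x):=\bigl(f(x)+\ell,\,\partial_{1}f(x),\,\partial_{2}f(x)\bigr).
\]
Since the source dimension $2$ is strictly smaller than the target dimension $3$ and the joint density of $F(x)$ is uniformly bounded on $K$, Bulinskaya's lemma yields $F^{-1}(0)\cap K=\varnothing$ almost surely. Equivalently, $-\ell$ is almost surely a regular value of $f$ on $K$, so by the implicit function theorem $\mathcal{Z}_{\ell}(f)\cap K$ is a smooth embedded $1$-submanifold of $\R^{2}$ and both $\mathcal{E}_{\ell}(f)\cap K$ and $\mathcal{E}_{-\ell}(-f)\cap K$ are smooth $2$-submanifolds of $\R^{2}$ with common boundary $\mathcal{Z}_{\ell}(f)\cap K$. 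Exhausting $\R^{2}$ by a countable sequence of such boxes gives the statement on all of $\R^{2}$.

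For item 2, decompose $\partial B_{R}$ into four open edges and four corners. Each corner $c$ contributes a probability zero event since $\Proba{f(c)=-\ell}=0$ by non-degeneracy of $f(c)$. On, say, the top edge $\{(t,R):t\in[-R,R]\}$, failure of transversality between $\mathcal{Z}_{\ell}(f)$ and $\partial B_{R}$ at a crossing point amounts to $\nabla f(t,R)$ being parallel to the outer normal $(0,1)$, i.e., $\partial_{1}f(t,R)=0$. A one-dimensional Bulinskaya argument applied to
\[
G:[-R,R]\to\R^{2},\quad G(t):=\bigl(f(t,R)+\ell,\,\partial_{1}f(t,R)\bigr),
\]
whose joint density is bounded by the same non-degeneracy of $(f,\nabla f)$, shows $G^{-1}(0)=\varnothing$ almost surely. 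The analogous argument on the three other edges (using $\partial_{2}f$ on the vertical ones) finishes the proof. Since the statement is already Lemma A.9 of \cite{quasi_independance}, I would invoke that reference for the full Bulinskaya machinery rather than reproduce it.
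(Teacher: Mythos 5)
Your proof is correct. Note that the paper does not actually prove this lemma; it cites it as Lemma A.9 of \cite{quasi_independance} and moves on, so there is no in-paper proof to compare against. The argument you give---deriving non-degeneracy of $(f,\nabla f)$ from the fact that the spectral measure charges a non-empty open set (so $-\mathrm{Hess}\,\kappa(0)=\int ss^{T}\,\mu(ds)$ is positive definite), together with $\kappa(0)>0$ and $\nabla\kappa(0)=0$, and then applying Bulinskaya's lemma to $F=(f+\ell,\nabla f)$ on the two-dimensional box for item 1 and to the one-dimensional restriction $G$ on each edge of $\partial B_{R}$ for item 2, with the four corners handled by $\Proba{f(c)=-\ell}=0$---is the standard and essentially canonical proof of such generic-regularity statements for smooth Gaussian fields, and is almost certainly what the cited reference does. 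You correctly identify the non-degeneracy of $(f,\nabla f)$ as the substantive check; it is precisely where the last item of Assumption~\ref{assumption:a1} is used. One minor remark: your reduction of non-transversality on the top edge to $\partial_{1}f(t,R)=0$ automatically absorbs the degenerate case $\nabla f(t,R)=0$, so item 2 does not even need to be run on the a.s.\ event of item 1; the map $G$ kills both at once.
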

Using this lemma, we see that it is legitimate to work with $\mathcal{E}=\excur(f)$. Now take $R\geq 1$ and $B= [-R,R]^2$.
As discussed in the previous section, we can say that $\excur(f)\cap B$ will be a union of several connected components, $\mathcal{C}_1, \dots, \mathcal{C}_n$.
\begin{definition}
\label{def:sB}
With the above notation we define $S(B)$ to be the random variable
\begin{equation*}
    S(B):= \sum_{i=1}^n \diam_\chem(\mathcal{C}_i).
\end{equation*}
\end{definition}
The reason we introduce $S(B)$ is because having control over $S(B)$ guarantees a control on the chemical distance between any two points connected in $\mathcal{E}_\ell(f)\cap B$. In fact, if two points $x,y$ are connected within $\mathcal{E}_\ell(f)\cap B$ then they must belong to the same connected component $\mathcal{C}_i$ for some $i$ and the chemical distance $\dchem(x,y)$ is smaller or equal than $\diam_\chem(\mathcal{C}_i)$ the chemical diameter of $\mathcal{C}_i$ which itself is smaller or equal than $S(B)$. Now we explain how we will achieve control over $S(B)$.
\begin{proposition}
\label{prop:kac_rice}
Assume that $q$ satisfies Assumptions \ref{assumption:a1}, \ref{assumption:a2} for some $m\geq 3$, \ref{assumption:a3} and \ref{assumption:a4} for some $\beta>2$. Then for all $0\leq k\leq m-1 $, we have a constant $C_k>0$ (depending on $k$, $\ell$ and $q$) such that for all $R\geq 1$ and $B=[-R,R]^2$,
\begin{equation*}
\mathbb{E}[S(B)^k] \leq C_kR^{2k}.
\end{equation*}
\end{proposition}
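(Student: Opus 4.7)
The plan is to reduce the moment bound on $S(B)$ to a moment bound on the length of the nodal set $\mathcal{Z}_\ell(f)\cap B$ via Lemma~\ref{lemma:grid}, and then to invoke the Kac-Rice formula. By Lemma~\ref{lemma:smooth}, $\mathcal{Z}_\ell(f)$ is almost surely a smooth $1$-submanifold meeting $\partial B$ transversely, so Lemma~\ref{lemma:grid} applies with $\mathcal{E}=\excur(f)$ and gives, for every connected component $\mathcal{C}_i$ of $\excur(f)\cap B$, the estimate $\diam_\chem(\mathcal{C}_i)\leq 2\,\text{length}(\partial \mathcal{C}_i)$. The topological boundary $\partial \mathcal{C}_i$ is a disjoint union of arcs that lie either on $\mathcal{Z}_\ell(f)\cap B$ or on $\partial B$. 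Since each point of $\mathcal{Z}_\ell(f)\cap B$ locally separates $\excur(f)$ from its complement, it belongs to the boundary of exactly one $\mathcal{C}_i$; similarly each point of $\partial B$ lies on the boundary of at most one $\mathcal{C}_i$. Summing over $i$ therefore yields
\begin{equation*}
S(B) \;\leq\; 2\sum_{i=1}^n \text{length}(\partial \mathcal{C}_i) \;\leq\; 2L + 2\,\text{length}(\partial B) \;=\; 2L + 16R,
\end{equation*}
where $L := \text{length}(\mathcal{Z}_\ell(f)\cap B)$. Using $(a+b)^k\leq 2^{k-1}(a^k+b^k)$ together with $R^k\leq R^{2k}$ (since $R\geq 1$), it is enough to establish $\mathbb{E}[L^k]\leq C_k R^{2k}$ for $0\leq k\leq m-1$.

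To bound $\mathbb{E}[L^k]$, I would apply the Kac-Rice formula to the level set $\{f=-\ell\}$, which expresses it as
\begin{equation*}
\mathbb{E}[L^k] \;=\; \int_{B^k} \mathbb{E}\left[\prod_{j=1}^k \|\nabla f(y_j)\|\,\middle|\, f(y_1)=\cdots=f(y_k)=-\ell\right] p_{(f(y_1),\dots,f(y_k))}(-\ell,\dots,-\ell)\, dy_1\cdots dy_k.
\end{equation*}
Under Assumptions~\ref{assumption:a1}, \ref{assumption:a2} for $m\geq 3$, and \ref{assumption:a4} for $\beta>2$, the field $f$ lies in $\mathcal{C}^{m-1}$ and its spectral measure has non-empty open support, so the Gaussian vector $(f(y_j))_{1\leq j\leq k}$ is non-degenerate for distinct points $y_j$ and $\nabla f$ has Gaussian moments of every order. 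A standard argument (see e.g.\ the Azaïs-Wschebor treatment of Kac-Rice) then shows that the integrand extends to a uniformly bounded function on $B^k$ provided $k\leq m-1$. Integration then yields $\mathbb{E}[L^k]\leq C_k(2R)^{2k}$, which is the desired bound.

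The main obstacle is precisely the uniform control of the Kac-Rice integrand near the diagonal of $B^k$, where the covariance matrix of $(f(y_1),\dots,f(y_k))$ degenerates and the density blows up. The standard desingularization consists in replacing $(f(y_1),\dots,f(y_k))$ by the linearly equivalent Gaussian vector $(f(y_1),\Delta_2 f,\dots,\Delta_k f)$, where $\Delta_j f$ is a suitably normalized divided difference of $f$ involving derivatives at $y_1$ up to order $j-1$; the covariance matrix of this new vector remains non-degenerate as the $y_j$ coalesce. This procedure consumes $k-1$ classical derivatives of $f$, which is exactly why the range $k\leq m-1$ appears in the statement. Once this non-degeneracy near the diagonal is secured, the Kac-Rice integrand is bounded on $B^k$ by a constant depending only on $k$, $\ell$ and $q$, and the proposition follows.
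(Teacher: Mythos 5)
Your reduction of $S(B)$ to nodal length via Lemma~\ref{lemma:grid} is exactly the paper's: $S(B)\leq 2\bigl(\text{length}(\partial B)+L(f,\ell,B)\bigr)$, so everything hinges on the bound $\mathbb{E}[L^k]\leq C_kR^{2k}$. The gap is in the Kac--Rice step. You claim that, after the divided-difference desingularization, the Kac--Rice integrand $\rho_k(y_1,\dots,y_k)$ ``extends to a uniformly bounded function on $B^k$.'' This is not true. The desingularization shows that the Gaussian density of the auxiliary vector $(f(y_1),\Delta_2 f,\dots,\Delta_k f)$ stays bounded as the $y_j$ coalesce, but the original density $p_{(f(y_1),\dots,f(y_k))}$ is recovered from it by dividing by the Jacobian of the linear change of variables, which vanishes on the diagonal. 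Already for $k=2$, $p_{f(y_1),f(y_2)}(-\ell,-\ell)\sim c/\|y_1-y_2\|$, while $\mathbb{E}\bigl[\|\nabla f(y_1)\|\,\|\nabla f(y_2)\|\mid f(y_1)=f(y_2)=-\ell\bigr]$ converges to a nonzero finite limit (roughly $\mathrm{Var}(\partial_v f)$ with $v\perp(y_2-y_1)$); so $\rho_2$ blows up like $1/\|y_1-y_2\|$ near the diagonal. What the desingularization actually buys is \emph{integrability} of this singularity, not boundedness, and this is precisely the nontrivial content of the references \cite{gass2023number} and \cite{letendre2023multijet} that the paper invokes; your argument, as written, would make those results trivial.

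The paper sidesteps the need for uniform control on $\rho_k$ entirely: it cites \cite{gass2023number, letendre2023multijet} for the single statement $C_k:=\mathbb{E}\bigl[L(f,\ell,[0,1]^2)^k\bigr]<\infty$ for $k\leq m-1$, then covers $[-R,R]^2$ by $N=4R^2$ unit boxes, uses the deterministic subadditivity $L(f,\ell,B)\leq\sum_j L(f,\ell,B_j)$, stationarity, and the multinomial expansion together with H\"older to get $\mathbb{E}[L(f,\ell,B)^k]\leq C_kN^k=4^kC_kR^{2k}$. To repair your version you would either have to (a) replace ``uniformly bounded'' by a genuine analysis of the integrable singularity near all the partial diagonals of $B^k$ and show that its contribution to $\int_{B^k}\rho_k$ is still $O(R^{2k})$, or (b) simply adopt the paper's route of citing the unit-box moment bound and scaling it up by subadditivity, which is both shorter and more robust.
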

\begin{proof}
The proof is a combination of our Lemma \ref{lemma:grid} and an application of a Kac-Rice formula. Consider the bounded open set $U=\mathring{B} = ]-R,R[^2$. Since our field $f$ is almost surely $\mathcal{C}^1$ and non degenerate we can consider the length of $\{x\in \R^2 \ |\ f(x)=-\ell\}\cap U$. We denote $L(f, \ell, U) := \text{length}(\{x\in \R^2\ |\ f(x)=-\ell\}\cap U)$ this length. In \cite{azais} this quantity is studied 
and it is shown how to obtain a close formula for the expectation of $L(f,\ell,U)$ in term of the covariance matrix of $f$. Recent results in \cite{gass2023number} and \cite{letendre2023multijet} show that the moment of order $k$ of $L(f,\ell,B)$ is finite.
More precisely, under our assumptions $f$ is a $\mathcal{C}^{m-1}$ differentiable function, hence using Theorem 1.6 in \cite{letendre2023multijet} or Theorem 1.5 in \cite{gass2023number} we obtain for any $0\leq k \leq m-1$
\begin{equation}
    \label{eq:kr1}
    C_k := \mathbb{E}[L(f,\ell,[0,1]^2)^k]  < \infty.
\end{equation}
For simplicity, we assume that $R\geq 1$ is an integer (otherwise replace $R$ by $\lceil R\rceil$ in the following). We cover the box $B=[-R,R]^2$ by $N:= 4R^2$ boxes of the form $x+[0,1]^2$ where $x\in \ZZ$ (note that some of these boxes will overlap on one side). We denote $(B_j)_{1\leq j\leq N}$ the collection of these boxes (the order in which we take the boxes is not relevant). Also, notice that we deterministically have:
\begin{equation}
    \label{eq:kr2}
    L(f,\ell,B) \leq \sum_{j=1}^N L(f,\ell, B_j).
\end{equation}
In the following we denote
\begin{equation}
    \label{eq:ka}
    X_j := L(f,\ell,B_j).
\end{equation}
By stationarity of the field we see that all $X_j$ have the same law, furthermore using \eqref{eq:kr1} is rephrased as
\begin{equation}
    \label{eq:kr3}
    \forall 1\leq j\leq N,\ \mathbb{E}[(X_j)^k] = C_k <\infty.
\end{equation}
Taking expectation in \eqref{eq:kr2} and using in succession the multinomial formula and the Hölder inequality we get
\begin{align}
    \label{eq:kac_rice2}
    \mathbb{E}[L(f,\ell,B)^k] & \leq \mathbb{E}\left[\left(\sum_{j=1}^N X_j\right)^k\right] \nonumber\\
    &= \sum_{k_1+\dots + k_N=k}\frac{k!}{k_1!\dots k_N!}\mathbb{E}[(X_1)^{k_1}\dots (X_N)^{k_N}] \nonumber\\
    &\leq \sum_{k_1+\dots + k_N=k}\frac{k!}{k_1!\dots k_N!}\mathbb{E}[(X_1)^{k}]^{\frac{k_1}{k}}\dots \mathbb{E}[(X_N)^{k}]^{\frac{k_N}{k}} \nonumber\\
    & = \sum_{k_1+\dots + k_N=k}\frac{k!}{k_1!\dots k_N!}C_k  = C_k N^k = 4^kC_k R^{2k}.
\end{align}
Now we apply Lemma \ref{lemma:grid} to see that  $\text{diam}_\chem(\mathcal{C}_i)\leq 2\text{length}(\partial C_i)$. By definition of $S(B)$ it only remains to control the total sum $\sum_{i=1}^n \text{length}(\partial \mathcal{C}_i)$. This sum is less or equal than the length of $\partial (\mathcal{E}_\ell(f)\cap B)$ which itself is less or equal than the length of $\partial B$ plus the length of $\partial (\mathcal{E}_\ell(f) \cap B)$.
Hence have the following inequalities
\begin{align}
    \label{eq:kac_rice3}
S(B) & = \sum_{i=1}^n \diam_\chem(\mathcal{C}_i)  \leq 2\sum_{i=1}^n\text{length}(\partial \mathcal{C}_i)\nonumber\\
& \leq 2 \left(\text{length}(\partial B)+L(f,\ell,B)\right) = 2\left(8R + L(f,\ell,B)\right).
\end{align}
Taking expectation in this inequality and using \eqref{eq:kac_rice2} we get
\begin{align*}
    \mathbb{E}[S(B)^k]& \leq \mathbb{E}\left[\left(2(8R+L(f,\ell,B))\right)^k\right] \\
    & \leq 2^k\sum_{i=0}^k \binom{k}{i}4^iC_iR^{2i}(8R)^{k-i} \\
    & \leq C'_k R^{2k},
\end{align*}
for some constant $C'_k>0$ that depend on $(C_i)_{0\leq i\leq k}$.  This yields the conclusion of our proposition since this constant only depends on $k$, $q$ and $\ell$.
\end{proof}

\section{Proof of the main theorem}
\label{sec:proof_of_main_thm}
In this section, we prove the main theorem.
\begin{proof}[Proof of Theorem \ref{thm:principal}]
Recall that by isotropy of the field, it is enough to look at the connection event $\{(0,0)\connects(x,0)\}$.
In the proof we take $x>3$ and some $\delta>0$ and we consider the following events:
\begin{itemize}
\item $\mathcal{C}_x:= \left\{(0,0)\connects (x,0)\right\}.$
\item $\mathcal{D}_x := \left\{\dchem((0,0),(x,0)) > C_1x\log^{3/2+2\delta}(x)\right\},$ where $C_1>0$ is a constant to be fixed later.
\end{itemize}
We also take $\varepsilon\in ]0,1[$ (for the moment $\varepsilon$ is arbitrary). Recall Definition \ref{def:thin_rectangle} of the thin rectangle $H$ of dimension $L(x)\times l(x)$ where 
\begin{equation*}
    l(x)=\log^{1+\delta}(x) \text{ and } L(x) = x+l(x),
\end{equation*}
and the events $\mathcal{G}^{(1)}_{x,\varepsilon}$ and $\mathcal{G}^{(2)}_{x,\varepsilon}$ introduced by \eqref{eq:event_gx} and \eqref{eq:event_gprimex} in Section \ref{sec:globa_struct} as well as the definition of the discretization $f^\varepsilon$ (see Definition \ref{def:fepsilon}). Recall also the definition of the harder level
\begin{equation*}
    \ell'=\frac{\ell}{2}.
\end{equation*}

The event $\mathcal{G}^{(1)}_{x,\varepsilon}$ implies the existence of the so called global structure. Under the event $\mathcal{G}^{(1)}_{x,\varepsilon}\cap \mathcal{C}_x$, there exists a path in $\excur(f)$ from $(0,0)$ to $(x,0)$ and we can apply Lemma \ref{lemma:global_struct1} to this path. The lemma provides us a path $\gamma$ from $(0,0)$ to $(x,0)$ that is contained in $H$. Furthermore, the lemma states that $\gamma$ can be written as $\gamma= \gamma_1\cup\gamma_2\cup \gamma_3$ where, $\gamma_1$ (resp $\gamma_3$) is a path contained in $\excur(f)$ and in a box of size $l(x)$ around $(0,0)$ (resp $(x,0)$), and $\gamma_2$ is contained in $\mathcal{E}_{\ell'}(f^\varepsilon)$. Now by Lemma \ref{lemma:global_struct2}, we see that on the event $\mathcal{G}^{(2)}_{x,\varepsilon}$, the path $\gamma_2$ can be replaced by a path $\gamma'_2$ in $\mathcal{E}_\ell(f)$ of length at most $C_2x\log^{1+\delta}(x)\varepsilon^{-1}$ where $C_2>0$ is a universal constant.

Moreover, with the notations of Definition \ref{def:sB}, we introduce the random variables
\begin{align*}
S_{l(x)} & = S([-l(x)/2,l(x)/2]\times [-l(x)/2,l(x)/2]),\\
S'_{l(x)} &= S([x-l(x)/2,x+l(x)/2]\times [-l(x)/2,l(x)/2]),
\end{align*}
and we define the events,
\begin{align*}
\mathcal{L}_x &:= \{S_{l(x)} \leq x\log^{3/2+2\delta}(x)\}, \\
\mathcal{L}'_x  &:= \{S'_{l(x)} \leq x\log^{3/2+2\delta}(x)\}.
\end{align*}
 Under the event $\mathcal{C}_x \cap \mathcal{G}^{(1)}_{x,\varepsilon} \cap \mathcal{L}_x \cap \mathcal{L}'_x$, we see that the paths $\gamma_1$ and $\gamma_3$ obtained earlier can be replaced by $\gamma_1'$ and $\gamma_3'$ paths of $\excur(f)$ of length at most $x\log^{3/2+2\delta}(x)$.

Now we choose
\begin{equation*}
    \varepsilon=\log^{-(1/2+\delta)}(x).
\end{equation*}
With the previous discussion we see that under $\mathcal{C}_x \cap \mathcal{G}^{(1)}_{x,\varepsilon}\cap \mathcal{G}^{(2)}_{x,\varepsilon} \cap \mathcal{L}_x\cap \mathcal{L}'_x$ we can find a path $\gamma' = \gamma_1'\cup \gamma_2' \cup \gamma_3'$ in $\excur(f)$ joining $(0,0)$ and $(x,0)$ of length at most:
\begin{align*}
    \text{length}(\gamma')&\leq x\log^{3/2+2\delta}(x)+C_2x\log^{3/2+2\delta}(x)+x\log^{3/2+2\delta}(x)\\ &\leq (2+C_2)x\log^{3/2+2\delta}(x).
\end{align*}
That way, the chemical distance between $(0,0)$ and $(x,0)$ is at most $(2+C_2)x\log^{3/2+2\delta}(x)$. Hence, choosing $C_1 = 2+C_2$ in the definition of the event $\mathcal{D}_x$ we observe that we have the following
\begin{equation}
\label{eq:m1}
\Proba{\mathcal{C}_x\cap \mathcal{D}_x} \leq \Proba{(\mathcal{G}^{(1)}_{x,\varepsilon})^c}+\Proba{(\mathcal{G}^{(2)}_{x,\varepsilon})^c}+\Proba{(\mathcal{L}_x)^c}+\Proba{(\mathcal{L}'_x)^c}.
\end{equation}
It remains to control all four term in the right-hand side of \eqref{eq:m1}. For the first term, we simply apply Lemma \ref{lemma:global_structure_high_prob} so that there exist constants $\varepsilon_0>0$, $c>0$ and $C>0$ (depending only on $q$ and $\ell$) such that if $x\geq 3$ and $\varepsilon = \log^{-(1/2+\delta)}(x)<\varepsilon_0$ we have:
\begin{equation}
\label{eq:m2}
\Proba{(\mathcal{G}^{(1)}_{x,\varepsilon})^c}\leq C\exp(-c\log^{1+\delta}(x)).
\end{equation}
For the second term, we apply Lemma \ref{lemma:event_gtilde_high_prob} so that there exist constants $\tilde{c}, \tilde{C}>0$ such that if $x\geq 3$ and $\varepsilon=\log^{-(1/2+\delta)}(x)<\ell'$ we have:
\begin{equation}
\label{eq:m3}
\Proba{(\mathcal{G}^{(2)}_{x,\varepsilon})^c}\leq \tilde{C}\exp(-\tilde{c}\log^{1+2\delta}(x)).
\end{equation}
Note that we can adjust the constants $C$ and $\tilde{C}$ so that the inequalities \eqref{eq:m2} and \eqref{eq:m3} hold for all $x\geq 3$.
For the third and fourth term, we use the stationarity of the field, Proposition \ref{prop:kac_rice} for some $0\leq k\leq m-1$ as well as Markov inequality to get
\begin{align}
\label{eq:m4}
\Proba{(\mathcal{L}_x)^c} = \Proba{({\mathcal{L}'}_x)^c} & \leq \Proba{S_{l(x)}\geq x\log^{3/2+2\delta}(x)} \nonumber\\
& \leq \frac{\mathbb{E}[(S_{l(x)})^k]}{\left(x\log^{3/2+2\delta}(x)\right)^k} \nonumber\\
& \leq  C_k\frac{\log^{2k(1+\delta)}(x)}{x^k\log^{3k/2+2k\delta}(x)} \nonumber\\&
\leq C_k \frac{\log^{k/2}(x)}{x^k}.
\end{align}
Finally, using \eqref{eq:m2}, \eqref{eq:m3}, \eqref{eq:m4} (for $k=m-1$) in \eqref{eq:m1} we conclude the proof of the theorem since $\delta>0$ is arbitrary.
\end{proof}
It only remains to prove Lemma \ref{lemma:grid}.
\begin{proof}[Proof of Lemma \ref{lemma:grid}]
The proof is long and somewhat technical but the idea behind is actually simple. We suggest the reader to have Figure \ref{fig:lemma_geometric} in mind when reading the proof.

First, recall that in the setting of Lemma \ref{lemma:grid}, $\mathcal{E}\subset \R^2$ denotes a two-dimensional smooth $\mathcal{C}^1$ differentiable manifold with boundary, such that $\partial \mathcal{E}$ only intersects $B=[-R,R]^2$ transversally. We recall that $\mathcal{C}$ is one of the connected components of $B\cap \mathcal{E}$. We also recall that the boundary of $\mathcal{C}$ is a union of disjoint Jordan curves and we write $\partial \mathcal{C}$ as $\partial \mathcal{C}=\gamma_1\sqcup \gamma_2 \sqcup \dots \sqcup \gamma_n$ for some $n\geq 1$, and where every $\gamma_i$ is a Jordan curve that is piecewise $\mathcal{C}^1$.
For each curve $\gamma_i$ we denote $\text{Int}(\gamma_i)$ and $\text{Ext}(\gamma_i)$ the interior and the exterior of $\gamma_i$, so that $\R^2 = \text{Int}(\gamma_i)\sqcup \gamma_i \sqcup \text{Ext}(\gamma_i).$ For simplicity we also make use of the notations $\overline{\text{Int}}(\gamma_i) := \gamma_i \cup \text{Int}(\gamma_i)$ and $\overline{\text{Ext}}(\gamma_i) := \gamma_i \cup \text{Ext}(\gamma_i)$. To have a clear understanding of the structure of $\partial \mathcal{C}$ we state the following claim.
\begin{claim}
\label{claim:1}
We can permutate the curves $\gamma_i$ so that we can assume that for all $2\leq j \leq n$, $\gamma_j \subset \text{Int}(\gamma_1)$. Furthermore we have $\mathcal{C} = \overline{\text{Int}}(\gamma_1)\setminus \left(\cup_{j=2}^n \text{Int}(\gamma_j)\right).$
\end{claim}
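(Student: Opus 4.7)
The plan is to treat $\mathcal{C}$ as a compact connected subset of $\R^2$ whose topological boundary is a finite disjoint union of Jordan curves and to argue that it has the structure of a closed disk with finitely many open disks removed: $\gamma_1$ will play the role of the outer boundary while the other $\gamma_j$ will bound the holes. The proof will combine the Jordan curve theorem with a \emph{one-sidedness} property of $\mathcal{C}$ along each $\gamma_i$ coming from the manifold-with-boundary structure of $\mathcal{E}$.

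The first preliminary ingredient is a plane-topology fact about two disjoint Jordan curves $\alpha, \beta \subset \R^2$: one cannot simultaneously have $\alpha \subset \text{Int}(\beta)$ and $\beta \subset \text{Int}(\alpha)$. Indeed, $\alpha \subset \text{Int}(\beta)$ implies $\overline{\text{Int}}(\alpha) \subset \overline{\text{Int}}(\beta)$, because $\text{Ext}(\beta)$ is an unbounded connected open set disjoint from $\alpha$ and is therefore contained in $\text{Ext}(\alpha)$; combining the two assumed inclusions would give $\overline{\text{Int}}(\alpha) = \overline{\text{Int}}(\beta)$, hence $\alpha = \beta$, contradicting disjointness. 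The same argument shows that for any $i \neq j$, exactly one of $\gamma_i \subset \text{Int}(\gamma_j)$, $\gamma_j \subset \text{Int}(\gamma_i)$, or $\gamma_i \subset \text{Ext}(\gamma_j)$ (the latter being equivalent to $\gamma_j \subset \text{Ext}(\gamma_i)$) holds.

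The second step identifies $\gamma_1$ via the unique unbounded connected component $V_0$ of $\R^2 \setminus \mathcal{C}$. The essential input is one-sidedness: since $\mathcal{C}$ is a $2$-manifold with boundary, near each smooth point of any $\gamma_i$ the set $\mathcal{C}$ lies locally on exactly one side of $\gamma_i$, and by connectedness of $\gamma_i$ this side is the same along the whole curve; the opposite side then sits locally in a single connected component of $\R^2 \setminus \mathcal{C}$, and $\gamma_i$ is precisely its topological boundary. Applied to $V_0$, this gives a unique $\gamma_i$ with $\partial V_0 = \gamma_i$, which we relabel $\gamma_1$. Being unbounded and connected in $\R^2 \setminus \gamma_1$ forces $V_0 \subset \text{Ext}(\gamma_1)$, and then one-sidedness places $\mathcal{C}$ on the opposite side, so $\mathcal{C} \subset \overline{\text{Int}}(\gamma_1)$. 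For $j \geq 2$ the inclusion $\gamma_j \subset \mathcal{C} \subset \overline{\text{Int}}(\gamma_1)$ together with $\gamma_j \cap \gamma_1 = \emptyset$ and connectedness of $\gamma_j$ forces $\gamma_j \subset \text{Int}(\gamma_1)$, which is the first half of the claim.

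For the set equality, I apply the same one-sidedness argument to each $\gamma_j$ with $j \geq 2$: the complementary side of $\gamma_j$ is a bounded connected component $V_j$ of $\R^2 \setminus \mathcal{C}$ with $\partial V_j = \gamma_j$, and by the Jordan curve theorem $V_j = \text{Int}(\gamma_j)$. This gives $\mathcal{C} \cap \text{Int}(\gamma_j) = \emptyset$ and hence the inclusion $\mathcal{C} \subset \overline{\text{Int}}(\gamma_1) \setminus \bigcup_{j=2}^n \text{Int}(\gamma_j)$. For the reverse inclusion, $\{V_0, V_2, \dots, V_n\}$ exhausts the connected components of $\R^2 \setminus \mathcal{C}$: each such component has its boundary in $\partial \mathcal{C} = \bigsqcup_i \gamma_i$, and by one-sidedness each $\gamma_i$ bounds exactly one component, so any complementary component is one of the $V_k$. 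A point $p$ of $\overline{\text{Int}}(\gamma_1) \setminus \bigcup_{j=2}^n \text{Int}(\gamma_j)$ therefore lies outside each $V_k$ (it is not in $V_0 = \text{Ext}(\gamma_1)$ nor in any $\text{Int}(\gamma_j) = V_j$), so $p \in \mathcal{C}$. The main technical obstacle is making the one-sidedness step fully rigorous at the corners where $\partial \mathcal{E}$ meets $\partial B$; the transversality hypothesis in the setup guarantees that the local picture there is always a wedge with $\mathcal{C}$ on a single side, so the global argument goes through unchanged.
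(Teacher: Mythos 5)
Your argument is correct, and it reorganizes the proof around a different central device than the paper does. The paper first proves a dichotomy for each $\gamma_i$ (either $\mathcal{C}\cap\text{Int}(\gamma_i)=\emptyset$, a ``hole'', or $\mathcal{C}\cap\text{Ext}(\gamma_i)=\emptyset$, a ``contour'') by a path-crossing argument using connectedness of $\mathcal{C}$, then shows there is exactly one contour and the rest are holes; the decomposition formula follows. You instead put the \emph{one-sidedness} of $\mathcal{C}$ along each boundary curve front and center, use it to set up a correspondence between boundary curves and connected components of $\R^2\setminus\mathcal{C}$, single out $\gamma_1$ as the curve attached to the unbounded component $V_0$, and then identify each remaining $V_j$ with $\text{Int}(\gamma_j)$. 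This buys you something: the one-sidedness is made explicit, whereas the paper's path-crossing step (``this path would cross $\gamma_i$, contradicting $\gamma_i\subset\partial\mathcal{C}$'') implicitly relies on exactly that property without naming it — a path in $\mathcal{C}$ hitting the boundary curve is not by itself a contradiction, one needs that $\mathcal{C}$ sits on a single side. On the other hand, your version compresses a step of its own: you assert that one-sidedness near $\gamma_i$ directly yields $\partial V=\gamma_i$ for the corresponding complementary component $V$ and that $\mathcal{C}$ therefore lies entirely in $\overline{\text{Int}}(\gamma_1)$, but passing from ``$\mathcal{C}$ is locally on the $\text{Int}$ side near $\gamma_1$'' to ``$\mathcal{C}\subset\overline{\text{Int}}(\gamma_1)$'' still requires a short open--closed partition argument using connectedness of $\mathcal{C}$, and ruling out $\partial V$ containing a second curve is cleanest once that inclusion and the pairwise trichotomy are in hand. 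These are easily filled, and the overall scheme — trichotomy for pairs, identify $\gamma_1$ by unboundedness, match remaining curves to bounded holes, exhaust complementary components — is sound and somewhat more transparent than the paper's contour/hole case analysis.
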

\begin{proof}[Proof of Claim \ref{claim:1}]
First we observe that given any of the curve $\gamma_i$ we have are in exactly one of the two following cases:
\begin{itemize}
    \item Either $\mathcal{C}\cap \text{Int}(\gamma_i)=\emptyset$ and we say that $\gamma_i$ is a \textit{hole} in $\mathcal{C}$.
    \item Either $\mathcal{C}\cap \text{Ext}(\gamma_i)=\emptyset$ and we say that $\gamma_i$ is a \textit{contour} of $\mathcal{C}$.
\end{itemize}
In fact, both options can not failed at the same time otherwise we would find $x\in \mathcal{C}\cap \text{Int}(\gamma_i)$ and $y\in \mathcal{C}\cap \text{Ext}(\gamma_i)$. Since $\mathcal{C}$ is connected, we would find a path in $\mathcal{C}$ connecting $x$ and $y$. This path would cross $\gamma_i$ contradicting the fact that $\gamma_i \subset \partial \mathcal{C}$. Also, both options can not be satisfied simultaneously, otherwise we would have $\mathcal{C}\subset \gamma_i$, which is a contradiction since $\mathcal{C}$ is a two dimensional manifold with boundary (while 
$\gamma_i$ is a one dimensional Jordan curve). 

Now, given any pair of two differents curves $\gamma_i$ and $\gamma_j$, since they do not intersect we have three disjoint options:
\begin{enumerate}
    \item $\gamma_i \subset \text{Int}(\gamma_j)$.
    \item $\gamma_j \subset \text{Int}(\gamma_i)$.
    \item $\gamma_i \subset \text{Ext}(\gamma_j) \text{ and }\gamma_j \subset \text{Ext}(\gamma_i)$.
\end{enumerate}
With this observation we argue that there can not be two different contours of $\mathcal{C}$. By contradiction, assume $\gamma_i$ and $\gamma_j$ are two contours of $\mathcal{C}$. We then have $\mathcal{C}\subset \overline{\text{Int}}(\gamma_j)$ and $\mathcal{C}\subset \overline{\text{Int}}(\gamma_j)$. This implies that option 3 for $\gamma_i$ and $\gamma_j$
 is not possible otherwise $\mathcal{C}$ would be the empty set. By symmetry, we can assume that $\gamma_i \subset \text{Int}(\gamma_j)$. But this is also not possible. In fact, if $\gamma_i\subset \text{Int}(\gamma_j)$ then $\gamma_j \subset \text{Ext}(\gamma_i)$. However $\gamma_i$ is a contour and we have $\mathcal{C}\cap \text{Ext}(\gamma_i)=\emptyset$. Also, the inclusion $\gamma_j\subset \mathcal{C}$ is always true. These two inclusions yield $\gamma_j \subset \text{Ext}(\gamma_i)\cap \mathcal{C} = \emptyset$ which is a contradiction. Finally we see that there can at most one curve that is a contour of $\mathcal{C}$.
 
 Similarly we can easily show that if $\gamma_1$ if a contour and $\gamma_i$ a hole then $\gamma_i \subset \text{Int}(\gamma_1)$, and that is $\gamma_i, \gamma_j$ are two holes, then $\gamma_i \subset \text{Ext}(\gamma_j) \text{ and }\gamma_j \subset \text{Ext}(\gamma_i)$.
 
 It remains to show that there exists one contour (say $\gamma_1$) of $\mathcal{C}$ but this follows from the fact that $\mathcal{C}$ is bounded while $\bigcap_{i=1}^n \overline{\text{Ext}}(\gamma_i)$ is unbounded. Finally the formula $\mathcal{C} = \overline{\text{Int}}(\gamma_1)\setminus \left(\cup_{j=2}^n \text{Int}(\gamma_j)\right)$ is just a rewriting of the fact that $\gamma_1$ is the unique contour and that $\gamma_j$ for $j\geq 2$ are holes of $\mathcal{C}$.
\end{proof}
We go back to the proof of Lemma \ref{lemma:grid}. By the conclusion of Claim \ref{claim:1}, without loss of generality we say that $\gamma_1$ is the contour of $\mathcal{C}$ and that $\gamma_2, \dots, \gamma_n$ are the holes of $\mathcal{C}$. We fix $z'\in \gamma_1$ a \textit{landmark} that is fixed until the end of the proof. We claim the following:
\begin{equation}
\label{eq:claim_length}
\forall x\in \mathcal{C},\  d^\mathcal{C}_\chem(x,z')\leq \text{length}(\gamma_1)+\frac{1}{2}\sum_{j=2}^n \text{length}(\gamma_j).
\end{equation}
Using the triangular inequality and taking the supremum over all points $x,y\in \mathcal{C}$, we easily see that this Claim \eqref{eq:claim_length} implies the conclusion of Lemma \ref{lemma:grid}. It only remains to prove the above claim \eqref{eq:claim_length}. Take any $x\in \mathcal{C}$ and consider $S$ the oriented segment in $\mathbb{R}^2$ joining $x$ to our landmark $z'$. More precisely we define
\begin{equation*}
    x(t) := (1-t)x+tz'\text{ for }t\in [0,1],
\end{equation*}
    and
    \begin{equation*}
    S:=[x,z']=\{x(t)\ |\ t\in [0,1]\}.
\end{equation*}
We proceed to an exploration of the segment $S$ starting from $x$ and heading towards $z'$. In this exploration we consider $\tau$ the first time that the segment $S$ intersects the contour $\gamma_1$.
\begin{equation*}
    \tau := \inf\{t\in [0,1]\ |\ x(t)\in \gamma_1\}.
\end{equation*}
Note that since $z'=x(1)$ belongs to $\gamma_1$ then $\tau$ is well defined. We denote $z := x(\tau)$ the first intersection point between $S$ and $\gamma_1$. We have the following claim:
\begin{claim}
\label{claim:2}
The infimum in the definition of $\tau$ is in fact a minimum and we have $z=x(\tau)$ belongs to $\gamma_1$.
\end{claim}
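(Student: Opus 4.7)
The plan is to observe that this is essentially a routine topological closedness argument: the set of times at which the segment hits $\gamma_1$ is closed, nonempty, and bounded below, so its infimum is attained.

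First I would recall that $\gamma_1$ is a Jordan curve, defined as the image of $S^1$ under a continuous injective map; since $S^1$ is compact, $\gamma_1$ is a compact, hence closed, subset of $\R^2$. Next, the parametrization $t \mapsto x(t) = (1-t)x + t z'$ defines a continuous map $[0,1] \to \R^2$. Therefore the set
\begin{equation*}
A := \{t \in [0,1] \mid x(t) \in \gamma_1\}
\end{equation*}
is the preimage of a closed set under a continuous map, hence closed in $[0,1]$.

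Then I would note that $A$ is nonempty, since $x(1) = z' \in \gamma_1$ by construction of the landmark. A nonempty closed subset of the compact interval $[0,1]$ attains its infimum, so $\tau = \inf A$ belongs to $A$, which is precisely the statement that $z = x(\tau) \in \gamma_1$. There is no substantial obstacle here; the only point to keep in mind is that one must invoke the compactness (and hence closedness) of the Jordan curve $\gamma_1$, which follows from the Jordan curve definition used earlier in the proof rather than from any regularity assumption on $\mathcal{E}$.
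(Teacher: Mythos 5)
Your proof is correct and uses essentially the same argument as the paper: the set of times at which the segment meets $\gamma_1$ is closed (you argue via preimage of a closed set, the paper via a sequence extraction) and nonempty, so its infimum is attained. You add the explicit justification that $\gamma_1$ is closed because it is the compact image of $S^1$, which the paper takes for granted.
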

\begin{proof}[Proof of Claim \ref{claim:2}.]
This is a classic argument that relies on the fact that $\gamma_1$ is a closed set.
Consider $(t_n)_{n\geq 0}$ a sequence of elements of $\{t\in [0,1]\ |\ x(t)\in \gamma_1\}$ such that $t_n \xrightarrow[n\to \infty]{}\tau$. Since $t\mapsto x(t)$ is continuous we have
\begin{align*}
    x(t_n) \xrightarrow[n\to \infty]{}x(\tau).
\end{align*}
Furthermore the set $\gamma_1$ is a closed set and $x(t_n)\in \gamma_1$ for all $n\geq 0$, this implies $x(\tau)\in \gamma_1$.
\end{proof}
We return to the proof of Lemma \ref{lemma:grid}. We have divided the segment $S$ into two segments, the segment $[x,z]$ and the segment $[z,z']$ (one of these segments can possibly be reduced to a point if $\tau=0$ or $\tau=1$). Since $z$ and $z'$ both belong to $\gamma_1$, the chemical distance between $z$ and $z'$ is less than half the length of $\gamma_1$. In fact, we can simply consider a path $\gamma$ that follows the boundary $\gamma_1$ from $z$ towards $z'$. Hence we have
\begin{equation}
    \label{eq:18}
 d_\chem^\mathcal{C}(z,z')\leq \frac{1}{2}\text{length}(\gamma_1).
\end{equation}
Now with \eqref{eq:18} in hands, we see with the triangular inequality that to obtain \eqref{eq:claim_length} it only remains to show that the chemical distance between $x$ and $z$ is less than $\frac{1}{2}\sum_{i=1}^n \text{length}(\gamma_i)$.
To prove this, we again proceed to an exploration of the segment $[x,z]$.
We will define several sequences by recurrence,
\begin{itemize}
 \item $(t_k)_{k\geq 1} \in [0,1]^{\mathbb{N}^*},$
 \item $(i_k)_{k\geq 1} \in \{1,\dots,n\}^{\mathbb{N}^*}$\text{ (where $n$ is the number of curves $\gamma_i$)},
 \item $(z_k)_{k\geq 1} \in (\R^2)^{\mathbb{N}^*},$
 \item $(t'_k)_{k\geq 1} \in [0,1]^{\mathbb{N}^*},$
 \item $(z'_k)_{k\geq 0} \in (\R^2)^\mathbb{N}.$
\end{itemize}
First we set $z'_0=x$. Then we define
\begin{align*}
    t_1 := \inf\{t\in [0,\tau]\ |\ x(t) \in \bigcup_{j=1}^n \gamma_j\}.
\end{align*}
We see that $t_1$ is well defined since $z=x(\tau)\in \gamma_1$. Furthermore, by a similar argument to the one in Claim \ref{claim:2},  we see that the infimum in the definition of $t_1$ is in fact a minimum and we denote $z_1 := x(t_1)$, so that $z_1$ belongs to $\bigcup_{j=1}^n \gamma_j$. We also define $i_1$ to be the integer $i_1\in \{1,\dots,n\}$ such that $z_1\in \gamma_{i_1}.$ Then we consider
\begin{align*}
    t'_1 := \sup\{t\in [t_1,\tau]\ |\ x(t) \in \gamma_{i_1}\}.
\end{align*}
And we define $z'_1 = x(t'_1)$ (again we see that this supremum is in fact a maximum). A more informal way to describe this construction is to say that $z_1$ is the first intersection point between $S$ and one of the curves $\gamma_j$, $i_1$ is the number of the curve we intersect for the first time, and $z'_1$ is the last intersection point between $S$ and $\gamma_{i_1}$. Note that it is possible to have $z_1=z'_1$ (if $t_1=t'_1$). If $i_1=1$ then necessarily by the definition of $z$ we see that $z_1=z'_1=z$ and we end the exploration. Otherwise, we continue the exploration. Suppose that $(z_k,z'_k,i_k,t_k,t'_k)$ are well defined for some $k\geq 1$, we consider the segment $[z'_k,z]$. We define
\begin{align*}
    t_{k+1} := \inf\{t\in [t'_k,1]\ |\ x(t)\in \bigcup_{j\in \{1,\dots,n\} \setminus \{i_1,\dots,i_k\}}\gamma_j\},
\end{align*}
and 
    $z_{k+1} := x(t_{k+1}).$
We also define $i_{k+1}$ to be the integer $i_{k+1}\in \{1,\dots,n\}\setminus \{i_1,\dots,i_k\}$ such that $z_{k+1}\in \gamma_{i_{k+1}}$. Then we define
\begin{align*}
    t'_{k+1} := \sup\{t\in [t_{k+1},\tau]\ |\ x(t)\in \gamma_{i_{k+1}}\},
\end{align*}
and $z'_{k+1}=x(t'_{k+1}).$
All these definitions make sense by adapting the argument of Claim \ref{claim:2} to $t_{k+1}$ and $t'_{k+1}$. We say that the process ends if $i_{k+1}=1$ (and so necessarily $z_{k+1}=z'_{k+1}=z$) otherwise we continue the exploration.

We argue that this process will always end. In fact, during the exploration all $i_k$ are pairwise distinct but there are at most $n$ value for $i_k$. We denote $N\geq 1$ to be the first $k$ such that $i_k=1$ (and so $z_k=z'_k=z$).
We claim that the following properties hold:
\begin{itemize}
 \item $\forall 0\leq k \leq N-1,\  [z'_k, z_{k+1}] \subset \mathcal{C}$.
 \item $\sum_{k=0}^{N-1} d_\chem^\mathcal{C}(z'_k, z_{k+1}) \leq \frac{1}{2}\text{length}(\gamma_1).$
 \item $\forall 1 \leq k \leq N-1, d_\chem^\mathcal{C}(z_k, z'_k) \leq \frac{1}{2}\text{length}(\gamma_{i_k})$.
\end{itemize}
The third property is probably the easiest one. By definition, $z_k$ and $z'_k$ are two points that belong to $\gamma_{i_k}$, so we have a path in $\mathcal{C}$ that joins them just by staying in $\gamma_{i_k}$. Such a path can be made to have a length less than $\frac{1}{2}\text{length}(\gamma_{i_k})$, hence the conclusion.

The first property comes from the fact that initially the point $z'_0=x$ is in the interior of $\gamma_1$ and at the exterior of every other curve $\gamma_j$ for $j\geq 2$ (see Claim \ref{claim:1}). By definition, $z_1$ is the first point along the segment $[z'_0,z']$ that intersects one of the curve $\gamma_i$, hence all the segment $[z'_0,z_1]$ is included in the interior of $\gamma_1$ and in the exterior of every other curve $\gamma_j$ for $j\geq 2$. This proves that the full segment $[z'_0,z_1]$ is included in $\mathcal{C}$ and the same argument apply to any other segment $[z'_k, z_{k+1}]$.

Finally for the second property, we observe thanks to the first property that the sum of all terms $d_\chem^\mathcal{C}(z'_k,z_{k+1})$ is equal to the sum of the Euclidean length of all segments $[z'_k, z_{k+1}]$ which itself is less than the length of the segment $[x,z']$:
\begin{equation}
    \label{eq:grid0}
    \sum_{k=0}^{N-1}d_\chem^\mathcal{C}(z'_k,z_{k+1}) = \sum_{k=0}^{N-1}\text{length}([z'_k,z_{k+1}])\leq \text{length}([x,z]).
\end{equation}
Now since $z$ is the first point of intersection of $S$ with $\gamma_1$, we see that $[x,z]$ is included in the interior of $\gamma_1$.
\begin{equation}
    \label{eq:grid1}
    [x,z]\subset \overline{\text{Int}}(\gamma_1).
\end{equation}
We can deduce from \eqref{eq:grid1} an inequality on the usual Euclidean diameter of the sets involved.
\begin{equation}
    \label{eq:grid2}
    \text{length}([x,z]) = \text{diam}([x,z])\leq \text{diam}(\overline{\text{Int}}(\gamma_1)).
\end{equation}
We recall that for a set $E\subset \R^2$, the diameter of $E$ is the quantity $\diam(E) =\sup_{x,y\in E}\norm{x-y}{}.$
Now, we state the following claim:
\begin{claim}
\label{claim:3}
If $\gamma_1$ is a piecewise $\mathcal{C}^1$ Jordan curve, then $\text{diam}\left(\overline{\text{Int}}(\gamma_1)\right)
\leq \frac{1}{2}\text{length}(\gamma_1).$
\end{claim}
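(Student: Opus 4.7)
The plan is to reduce the statement to the well-known fact that a Jordan curve has length at least twice its Euclidean diameter. Concretely, I would prove the chain of inequalities
\begin{equation*}
\text{diam}\bigl(\overline{\text{Int}}(\gamma_1)\bigr) \;\leq\; \text{diam}(\gamma_1) \;\leq\; \tfrac{1}{2}\text{length}(\gamma_1),
\end{equation*}
from which the claim follows immediately.

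For the first inequality, I would pick any two points $p,q \in \overline{\text{Int}}(\gamma_1)$ and consider the affine line $L$ through them. Since $\overline{\text{Int}}(\gamma_1)$ is compact, the intersection $L \cap \overline{\text{Int}}(\gamma_1)$ is a compact (possibly disconnected) subset of $L$. Its two extremal points $p^\star, q^\star$ along $L$ must belong to the topological boundary $\gamma_1$, because any point of $L \cap \overline{\text{Int}}(\gamma_1)$ strictly interior to the region would admit neighbors along $L$ still lying in the region. As $p$ and $q$ lie between $p^\star$ and $q^\star$ on $L$, we obtain $\|p-q\| \leq \|p^\star-q^\star\| \leq \text{diam}(\gamma_1)$. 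Taking the supremum over $p,q$ yields the first inequality. Note that this argument does \emph{not} require $\overline{\text{Int}}(\gamma_1)$ to be convex; it only uses that its topological boundary is $\gamma_1$, which is granted by the Jordan curve theorem.

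For the second inequality, I would use compactness of $\gamma_1$ to find $p^\star, q^\star \in \gamma_1$ achieving $\text{diam}(\gamma_1)$. These two points split the Jordan curve $\gamma_1$ into two arcs $\alpha_1$ and $\alpha_2$, each joining $p^\star$ to $q^\star$ and each piecewise $\mathcal{C}^1$. Since the Euclidean straight line is the shortest path in $\R^2$, we have $\text{length}(\alpha_i) \geq \|p^\star - q^\star\|$ for $i=1,2$, and therefore
\begin{equation*}
\text{length}(\gamma_1) = \text{length}(\alpha_1) + \text{length}(\alpha_2) \geq 2\|p^\star - q^\star\| = 2\,\text{diam}(\gamma_1).
\end{equation*}

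I do not anticipate any serious obstacle. The only subtle point is the first inequality, where one might worry about non-convexity of the enclosed region; this is dispensed with by the line-extension argument above. Everything else is elementary.
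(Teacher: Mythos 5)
Your proof is correct and takes essentially the same approach as the paper: both reduce to showing that the diameter of $\overline{\text{Int}}(\gamma_1)$ is attained by a pair of points on $\gamma_1$ (the paper via a ball-pushing contradiction at a diameter-realizing pair, you via extending the chord through a given pair to its boundary endpoints), and both then split the Jordan curve at those two points into arcs whose lengths are bounded below by the chord length.
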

\begin{proof}[Proof of Claim \ref{claim:3}.]
Suppose that $x$ and $y$ realize the diameter of $\overline{\text{Int}}(\gamma_1)$ (such points can be found since $\overline{\text{Int}}(\gamma_1)$ is a compact set). Then we argue that $x$ and $y$ belong to $\gamma_1$. If $x=y$ there is nothing to prove, the diameter is $0$. Otherwise, by contradiction, if say $x\not\in \gamma_1$ we can find $B(x,r)$ a small Euclidean ball of radius $r>0$ centered at $x$, such that $B(x,r)\subset \text{Int}(\gamma_1)$. Now we consider $z=x+\frac{r(x-y)}{2\norm{x-y}{}}$. By definition we have $z\in \text{Int}(\gamma_1)$ however we have $\norm{z-y}{}>\norm{x-y}{}$ contradicting the definition of $x$ and $y$.

We can now assume that $x$ and $y$ belong to $\gamma_1$, we see that $\gamma_1$ is the union of two arcs joining $x$ and $y$, each of this arc is of length at least $\text{diam}(\text{Int}_{\gamma_1})$ (which is the Euclidean distance between $x$ an $y$). This yields the conclusion.
\end{proof}
We return to the proof of Lemma \ref{lemma:grid}. Combining \eqref{eq:grid0}, \eqref{eq:grid2} together with Claim \ref{claim:3}, we get
\begin{equation}
    \label{eq:grid3}
    \sum_{k=0}^{N-1} d_\chem^\mathcal{C}(z'_k, z_{k+1}) \leq \frac{1}{2}\text{length}(\gamma_1).
\end{equation}
which is precisely the second property stated above.

We can finally conclude the proof of Lemma \ref{lemma:grid} by using the triangular inequality. In fact
\begin{align}
\label{eq:19}
    d_\chem^\mathcal{C}(x,z)& \leq \sum_{k=0}^{N-1}d_\chem^\mathcal{C}(z'_k,z_{k+1})+\sum_{k=1}^{N-1}d_\chem^\mathcal{C}(z_{k}, z'_k)\nonumber\\
    &\leq \frac{1}{2}\text{length}(\gamma_1) + \sum_{k=1}^{N-1}\frac{1}{2}\text{length}(\gamma_{i_k})\nonumber\\
    &\leq \frac{1}{2}\sum_{i=1}^n\text{length}(\gamma_i).
\end{align}
As mentioned before \eqref{eq:18} and \eqref{eq:19} together imply \eqref{eq:claim_length} which itself implies the conclusion of the lemma.
\end{proof}

\nocite{*}
\printbibliography

\end{document}